\numberwithin{equation}{section}
\theoremstyle{plain}
\newtheorem{Th}{Theorem}[section]
\newtheorem{Lem}[Th]{Lemma}
\newtheorem{Cor}[Th]{Corollary}
\newtheorem{Prop}[Th]{Proposition}
\newtheorem{problem}[Th]{Problem}
\theoremstyle{remark}
\theoremstyle{definition}
\newtheorem{Def}[Th]{Definition}
\newcommand{\coh}{\mathrm{coh}}
\newcommand{\Ext}{\mathrm{Ext}}
\newcommand{\Hom}{\mathrm{Hom}}
\newcommand{\cO}{\mathcal{O}}
\newcommand{\cD}{\mathcal{D}}
\newcommand{\K}{K_\mathbb{C}}
\newcommand{\Enkr}{\widehat {\mathbb{E}}_k^n}
\newcommand{\Enk}{{\mathbb{E}}_k^n}
\newcommand{\dbcoh}{\cD(X)} 
\newcommand{\mT}{\mathcal{T}}
\newcommand{\mA}{\mathcal{A}}
\newcommand{\mB}{\mathcal{B}}
\newcommand{\mC}{\mathcal{C}}
\newcommand{\Z}{\mathbb{Z}}
\newcommand{\C}{\mathbb{C}}
\newcommand{\be}{\bar{E}}
\newcommand{\wa}{\widehat \mA}
\newcommand{\wb}{\widehat \mB}
\begin{document}

\title{Lefschetz exceptional collections\\[1ex]in $S_k$-equivariant categories of $(\mathbb{P}^n)^k$}
\author{Mikhail Mironov}
\date{ }

\clearpage
\maketitle

\begin{abstract}
 We consider the bounded derived category of $S_k$-equivariant coherent sheaves on $(\mathbb{P}^n)^k$. The goal of this paper is to construct in this category a rectangular Lefschetz exceptional collection when this is possible, or a minimal Lefschetz exceptional collection when a rectangular one does not exist. The main results of the paper include the construction of a rectangular Lefschetz exceptional collection in the case $k=3$ and in the case $n=1$ when $\mathrm{gcd}(n+1,k)=1$. We also construct minimal Lefschetz exceptional collection for $n=1$ and even $k$, and for $n=2$ and $k=3$.

 %For $n \not= 2 \text{ mod } 3$, $k=3$ and $n=1$, $k=2m+1$ we construct a rectangular Lefschetz decomposition of this category whose components are $S_k$-invariant categories generated by exceptional collections. We discuss restrictions for the existence of such decomposition.
\end{abstract}

\tableofcontents

\section {Introduction}

The bounded derived category of coherent sheaves is the main homological invariant of an algebraic variety
which captures the most essential geometric information.
It stands in the focus of many recent research papers.
One of the ways to describe it is via an exceptional collection.

Recall that an object $E$ in a $\mathbb{C}$-linear triangulated category $\mathcal{T}$ is \emph{exceptional} 
if~$\Ext^0(E,E) = \mathbb{C}$ and $\Ext^i(E,E) = 0$ for $i \ne 0$.
Furthermore, a collection~$E_1, \dots, E_r$ of objects in $\mathcal{T}$ is an \emph{exceptional collection}
if each $E_i$ is an exceptional object and~$\Ext^\bullet(E_i,E_j) = 0$ for $i > j$.
An exceptional collection is \emph{full} if the smallest full triangulated subcategory of $\mathcal{T}$ containing all $E_i$ 
coincides with $\mathcal{T}$.

Recently a special class of exceptional collections attracted much attention.
Recall that an exceptional collection $E_1,\dots,E_r$ in the bounded derived category of coherent sheaves $\dbcoh$ of a smooth projective variety $X$ is \emph{Lefschetz} with respect to a line bundle $\mathcal{L}$
if there is a partition $r = r_0 + r_1 + \dots + r_d$ with $r_0 \ge r_1 \ge \dots \ge r_d$ such that 
\begin{equation*}
E_{r_0 + r_1 + \dots + r_{i-1} + t} \cong E_t \otimes \mathcal{L}^i\qquad\text{for all } 1 \le t \le r_{i}\qquad\text{and } 1 \le i \le d.
\end{equation*}
In other words, if the objects of the collection are obtained by $\mathcal{L}$-twists from the subcollection of the first $r_0$ objects 
according to the pattern provided by the partition.

As it is clear from the definition, a Lefschetz collection is determined by its \emph{starting block} $E_1,\dots,E_{r_0}$ and the partition $(r_0,r_1,\dots,r_d)$.
It is less evident, but is still true, that if a Lefschetz collection is full, then the partition is itself determined by the starting block of the collection {\cite[Lemma~4.5]{Kuznetsov1}}.
Thus, extendability to a Lefschetz collection is just a property of an exceptional collection $E_1,\dots,E_{r_0}$.

It follows that there is a natural partial order on the set of all Lefschetz collections in $\dbcoh$ --- 
a Lefschetz collection with a starting block $E_1,\dots,E_{r_0}$ is \emph{smaller} than a Lefschetz collection with a starting block $E'_1,\dots,E'_{s_0}$
if $E_1,\dots,E_{r_0}$ is a subcollection in  $E'_1,\dots,E'_{s_0}$, see \cite[Definition~1.4]{KuznetsovSmirnov}.

A Lefschetz collection $E_1,\dots,E_r$ with partition $r_0,r_1,\dots,r_d$ is called \emph{rectangular of length $d+1$}, if $r_0 = r_1 = \dots = r_d$
(equivalently, if the Young diagram representing the partition is a rectangle of length $d+1$).
Of course, a necessary condition for the existence of a rectangular Lefschetz collection in $\dbcoh$ is a factorization 
\begin{equation}
\label{eq:k0-divisibility}
\operatorname{\mathrm{rk}} \big( K_0(\dbcoh) \big) = r_0(d + 1)
\end{equation}
for the rank of the Grothendieck group of $X$.
On the other hand, if a rectangular Lefschetz decomposition in $\dbcoh$ exists, and if its length~\mbox{$d + 1$} has the property that~$\mathcal{L}^{d+1} \cong \omega_X^{-1}$ where $\omega_X$ is the canonical bundle of $X$, 
that is $d + 1$ equals the \emph{index} of $X$ with respect to $\mathcal{L}$, 
then this collection is automatically minimal (this follows easily from Serre duality, see \cite[Subsection~2.1]{KuznetsovSmirnov}).

Lefschetz collections have many nice properties and are very important for homological projective duality and categorical resolutions of singularities {\cite{Kuznetsov2}}.
Especially nice and important are rectangular (resp.\ minimal) Lefschetz collections.
So, the following problem is very interesting.

\begin{problem}
\label{problem:lefschetz}
Given a smooth projective variety $X$ and a line bundle $\mathcal{L}$, construct a full rectangular Lefschetz collection in $\dbcoh$ 
with respect to $\mathcal{L}$ of length equal to the index of $X$, or, if the above is impossible, a minimal Lefschetz collection.
\end{problem}

There are many varieties $X$ for which the above problem was solved.
Among these are projective spaces, most of the Grassmannians, and some other homogeneous spaces {\cite{Fonarev}}.
In this paper we discuss Problem~\ref{problem:lefschetz} for a very simple variety
\begin{equation*}
X  = X_k^n :=\underbrace{\mathbb {P}^n \times \mathbb {P}^n \times \dots \times \mathbb {P}^n}_{k \text{ copies}},
\end{equation*}
but replace the category $\cD(X_k^n)$ with the equivariant derived category $\cD_{S_k}(X_k^n)$ 
with respect to the natural action of the symmetric group $S_k$ (by permutation of factors).
Note that this category can be considered as the derived category of the \emph{quotient stack}~$[X_k^n/S_k]$.
The line bundle $\mathcal{L}$ here is, of course, the ample generator~$\cO(1,1,\dots,1)$ of the invariant Picard group $\operatorname{\mathrm{Pic}}(X_k^n)^{S_k}$.
Note that the index of~$X_k^n$ with respect to $\mathcal{L}$ is equal to $n + 1$, so the goal of the paper can be formulated as follows.

\begin{problem}
\label{problem:xkn}
Find a full rectangular Lefschetz collection of length $n + 1$ in $\cD_{S_k}(X_k^n)$ with respect to the line bundle $\cO(1,1,\dots,1)$ 
or a minimal Lefschetz collection if the above is impossible.
\end{problem}

Note that without passing to the equivariant category the problem becomes trivial.
To construct a rectangular Lefschetz collection in $\cD(X_k^n)$ 
one can just choose any full exceptional collection in $\cD(X_{k-1}^n)$ and consider its pullback to~$X_k^n$ as the starting block.
It is elementary to check that it extends to a rectangular Lefschetz collection of length $n + 1$.
However, the $S_k$-symmetry in this construction is broken, and it cannot be performed in the equivariant category.

For $k = 1$ the Problem~\ref{problem:xkn} is trivial (the desired collection is just 
the Beilinson exceptional collection $\cO,\cO(1),\dots,\cO(n)$ of line bundles on $\mathbb{P}^n$).
Furthermore, for~\mbox{$k = 2$} the Problem~\ref{problem:xkn} was essentially solved in {\cite{Rennemo}}.

The main result of our paper is a partial solution to the Problem~\ref{problem:xkn}.

First, we construct in Theorem~\ref{theorem:semiorthogonality} a rectangular $S_k$-invariant Lefschetz exceptional collection in $\cD(X_k^n)$ 
whose cardinality in case of coprime $k$ and $n + 1$ equals the rank of the Grothendieck group of $X_k^n$ (by Elagin's Theorem, see Theorem~\ref{theorem:elagin}, 
this gives an exceptional collection in the equivariant category, whose length equals the rank of its Grothendieck group).
So, it is natural to expect that this collection is full and (in the coprime case) gives a solution to Problem~\ref{problem:xkn}.
However, in general we could not prove its fullness.

Our second main result is a proof of fullness of the above collection for $k = 3$ and~$n = 3p$ or $n = 3p + 1$ (this ensures that $k$ and $n + 1$ are coprime).

We also perform a first step in the direction of non-coprime $k$ and $n + 1$ by constructing a minimal $S_3$-invariant Lefschetz exceptional collection in $\cD(X_3^2)$ 
(including a proof of its fullness).

Besides that we also solve the Problem~\ref{problem:xkn} for $n = 1$, that is, construct a rectangular $S_k$-invariant Lefschetz collection of length $2$ in~$\cD(X_k^1)$ 
when $k$ is odd, and a minimal Lefschetz collection when $k$ is even.
However, this case is much more simple than the case $k=3$ discussed above.

An interesting feature of the Lefschetz collections that we construct in Theorem~\ref{theorem:semiorthogonality}
is that they resemble very much the minimal Lefschetz collections in the derived categories of the Grassmannians $\operatorname{\mathrm{Gr}}(k,n+1+k)$ constructed by Anton Fonarev, see \cite{Fonarev}.
It would be very interesting to understand the relations between these, 
since on one hand, this suggests a possible solution to the Problem~\ref{problem:xkn} for other values of $k$
(by considering analogues of Fonarev's collections),
and on the other hand, a solution to the Problem~\ref{problem:xkn} can help in dealing with the Grassmannians $\operatorname{\mathrm{Gr}}(k,n)$ when $k$ and $n$ are not coprime 
(in this case there is no rectangular collection on the Grassmannian, and a minimal collection is not quite known).

This paper is organized as follows. 
In Section~\ref{Preliminaries} we recall the definitions of full exceptional collections, Lefschetz and rectangular decompositions, and Elagin's Theorem.
In Section~\ref{A Lefschetz collection} we construct an $S_k$-invariant exceptional collection in $\cD(X_k^n)$ and discuss numerical restrictions
for the existence of a rectangular Lefschetz collection and some numerical bounds for a minimal Lefschetz collection.
Finally, in Section~\ref{section:fullness} we prove fullness of the constructed collections for $X_k^1$, $X_3^{3p}$, $X_3^{3p + 1}$ and $X_3^2$ respectively.

The author is grateful to A.~Kuznetsov for constant attention to this work.

\section {Preliminaries}\label{Preliminaries}

Given an algebraic variety $X$ we denote the bounded derived category $\cD^b(\coh (X))$ of coherent sheaves on $X$ by $\cD(X)$. 
In this paper we concentrate on the case when~$X$ is a power of a projective space
\begin{equation*}
X = X_k^n = (\mathbb{P}^n)^k,
\end{equation*}
In some cases, we will omit the indices $k$ and $n$ and write $\cD(X)$ instead $\cD(X_k^n)$.

\subsection{Exceptional collections in $\cD(X_k^n)$}

Clearly, $X_k^n$ is a smooth projective variety with $dim(X)=kn$.
Its Picard group is isomorphic to $\operatorname{\mathrm{Pic}}(X_k^n) \cong \mathbb{Z}^k$
and has a basis consisting of the pullbacks of hyperplane classes of the factors.
For $a = (a_1,\dots,a_k) \in \mathbb{Z}^k$ we write
\begin{equation*}
\cO(a) = \cO(a_1,\dots,a_k) = \cO(a_1) \boxtimes \dots \boxtimes \cO(a_k)
\end{equation*}
for the corresponding line bundle on $X_k^n$.
We note that by the K\"unneth formula
\begin{equation}
\label{eq:ext-kunneth}
\Ext^\bullet(\cO(a),\cO(b)) \cong \bigotimes_{i=1}^k \Ext^\bullet(\cO(a_i),\cO(b_i)).
\end{equation}
In particular, any line bundle on $X_k^n$ is exceptional, and the line bundles $\cO(a)$ and~$\cO(b)$ are semiorthogonal, i.e., $\Ext^\bullet(\cO(a),\cO(b))$ is equal to $0$, if and only if the pair~$(\cO(a_i),\cO(b_i))$
on $\mathbb{P}^n$ is semiorthogonal  for at least one $i$.
In view of Bott's formula for the cohomology of line bundles on a projective space, we can rewrite the semiorthogonality condition as 
\begin{equation}
\label{eq:semiorthogonality}
\Ext^\bullet(\cO(a),\cO(b)) = 0
\text{ if and only if $0 < a_i - b_i \le n$ for some $1 \le i \le k$.}
\end{equation}
This property allows to verify easily semiorthogonality of collections of line bundles.
For fullness, the following observations are useful.

For a subset $I \subset \{1,\dots,k\}$ of indices define the set $[0,n]^I \subset \operatorname{\mathrm{Pic}}(X_k^n)$ as 
\begin{equation*}
[0,n]^I=\left \{a\in \Z^k \mid a_i \in [0,n] \text{ if } i \in I\text{ and } a_i=0 \text{ if } i \notin I \right \}.
\end{equation*}
If $I=\{1,\dots,k\}$, then denote $[0,n]^I$ by $[0,n]^k$.

\begin{Th} 
\label{generate} 
The collection $\{ \cO(a) \}_{a \in [0,n]^k}$ \textup(lexicographically ordered\textup) is a full exceptional collection in $\cD(X_k^n)$.
\end{Th}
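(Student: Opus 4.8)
The plan is to build the full exceptional collection on $X_k^n = (\mathbb{P}^n)^k$ from the Beilinson collection on a single $\mathbb{P}^n$ by an induction on $k$, using the standard behaviour of exceptional collections under external tensor products (the Künneth formula). The semiorthogonality is already handled by the criterion \eqref{eq:semiorthogonality}: for $a, b \in [0,n]^k$ with $a$ lexicographically larger than $b$, there is a first index $i$ where $a_i \ne b_i$, and then $a_i > b_i$; since $0 \le b_i < a_i \le n$ we get $0 < a_i - b_i \le n$, so $\Ext^\bullet(\cO(a),\cO(b)) = 0$. Each $\cO(a)$ is exceptional by \eqref{eq:ext-kunneth} because each $\cO(a_i)$ is exceptional on $\mathbb{P}^n$. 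So the only real content is \emph{fullness}.

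For fullness I would argue by induction on $k$, the base case $k=1$ being Beilinson's theorem that $\cO, \cO(1), \dots, \cO(n)$ generate $\cD(\mathbb{P}^n)$. For the inductive step, write $X_k^n = X_{k-1}^n \times \mathbb{P}^n$ and use the general fact (e.g. from Künneth / Orlov's theorem on derived categories of products, or Bondal--Van den Bergh generation arguments) that if $\{F_p\}$ is a full exceptional collection in $\cD(X_{k-1}^n)$ and $\{G_q\}$ is one in $\cD(\mathbb{P}^n)$, then $\{F_p \boxtimes G_q\}$ is a full exceptional collection in $\cD(X_{k-1}^n \times \mathbb{P}^n)$, suitably ordered. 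Taking $\{F_p\} = \{\cO(a)\}_{a \in [0,n]^{k-1}}$ (full by induction) and $\{G_q\} = \{\cO(c)\}_{c \in [0,n]}$ gives exactly $\{\cO(a)\}_{a \in [0,n]^k}$, and one checks the lexicographic order is compatible with a valid ordering of the product collection.

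Alternatively, and perhaps cleaner to write self-containedly, one can give a direct generation argument: let $\mathcal{A} \subset \cD(X_k^n)$ be the triangulated subcategory generated by the $\cO(a)$, $a \in [0,n]^k$; I want to show every $\cO(b)$ with $b \in \mathbb{Z}^k$ lies in $\mathcal{A}$, since these generate $\cD(X_k^n)$. Using the Koszul/Euler sequences on each factor $\mathbb{P}^n$ — the resolution expressing $\cO(n+1)$ in terms of $\cO, \cO(1), \dots, \cO(n)$ and its pullback, tensored appropriately, together with the dual resolution going the other way — one shows by an induction on $\sum_i \max(0, b_i - n) + \sum_i \max(0, -b_i)$ (a measure of how far $b$ is from the box $[0,n]^k$) that $\cO(b) \in \mathcal{A}$: applying an Euler-type sequence in the coordinate that is out of range exhibits $\cO(b)$ as an iterated cone of line bundles closer to the box.

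The main obstacle is bookkeeping rather than conceptual: one must be careful that the external-product ordering of exceptional collections genuinely refines to the lexicographic order claimed here (this is why the lexicographic — rather than some other — ordering appears), and that the generation step via Euler sequences is run in the right coordinate and does not accidentally leave the other coordinates' ranges. I expect the cleanest exposition is the inductive product argument, invoking the standard lemma on exceptional collections of $\cD(Y \times Z)$, with a short separate check that lexicographic order on $[0,n]^k$ is an admissible ordering for the product collection (it is: it refines the order "first block from $X_{k-1}^n$, then last coordinate", up to the usual freedom in ordering within incomparable pairs).
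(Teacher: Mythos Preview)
Your proposal is correct. The paper's own proof is much terser than yours: for semiorthogonality it invokes~\eqref{eq:semiorthogonality} exactly as you do, and for fullness it simply cites~\cite{Samokhin} without further argument. Your inductive product argument (building the collection on $X_k^n = X_{k-1}^n \times \mathbb{P}^n$ from Beilinson's collection via the standard result on exceptional collections for products) and your alternative direct generation via Euler sequences are both valid and standard routes to what the citation covers; the first is essentially what lies behind the cited result. So you have supplied a self-contained sketch where the paper defers to the literature, and there is no gap. One small remark: in your lexicographic check, the ordering you describe (``first block from $X_{k-1}^n$, then last coordinate'') is not quite lexicographic on $[0,n]^k$; the lexicographic order puts the \emph{first} coordinate outermost, so it matches the product ordering $\mathbb{P}^n \times X_{k-1}^n$ rather than $X_{k-1}^n \times \mathbb{P}^n$. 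This is harmless since the two factorizations are symmetric, but you should phrase the induction accordingly.
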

\begin{proof}
Semiorthogonality of the collection follows easily from~\eqref{eq:semiorthogonality}.
For fullness we refer to~\cite{Samokhin}.
\end{proof}

We will also need the following simple consequence of the fullness of the above collection.

\begin{Cor} 
\label{bbb2}
Let\/ $\mT$ be a triangulated subcategory of $\cD(X_k^n)$. 
Assume that for some subset $I\subset \{1,\dots,k\}$ and some $a \in \operatorname{\mathrm{Pic}}(X_k^n)$ one has $\cO(a + b) \in \mT$ for any~$b \in [0,n]^I$.
Then the same holds true for any $b \in \mathbb{Z}^I$.
\end{Cor}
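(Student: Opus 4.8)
The plan is to reduce the claim to the defining inclusions by an induction that, starting from a box $[0,n]^I$ of twists lying in $\mathcal{T}$, "slides" the box one step in each coordinate direction $i \in I$ at a time. Concretely, fix $i \in I$ and let $e_i \in \mathbb{Z}^k$ be the $i$-th basis vector; I claim that if $\cO(a + b) \in \mathcal{T}$ for all $b \in [0,n]^I$, then also $\cO(a + e_i + b) \in \mathcal{T}$ and $\cO(a - e_i + b) \in \mathcal{T}$ for all $b \in [0,n]^I$. Iterating this over all $i \in I$ and all integer shifts then yields $\cO(a + b) \in \mathcal{T}$ for every $b \in \mathbb{Z}^I$, which is exactly the statement (note that for $i \notin I$ there is nothing to do, since the corresponding coordinate is pinned to $0$ in $\mathbb{Z}^I$).

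For the single-step claim, the key tool is the Koszul-type resolution of a line bundle on $\mathbb{P}^n$ coming from the Euler sequence, pulled back to the $i$-th factor. On $\mathbb{P}^n$ one has, for the tautological class $h$, an exact complex expressing $\cO((n+1)h)$ in terms of $\cO(nh), \cO((n-1)h), \dots, \cO(0)$ (the dual Beilinson/Koszul complex $0 \to \cO \to \cO(h)^{\oplus \binom{n+1}{1}} \to \dots \to \cO((n+1)h) \to 0$). Boxing this with $\cO$ on the remaining $k-1$ factors and twisting by $\cO(a - e_i)$, we obtain an exact complex on $X_k^n$ all of whose terms except the last are direct sums of line bundles of the form $\cO(a + b)$ with $b_i \in [0,n]$ and $b_j = 0$ for $j \neq i$ — in particular $b \in [0,n]^I$ — and whose last term is $\cO(a + e_i + b')$ for an appropriate $b'$ obtained by shifting the $i$-th coordinate. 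Since $\mathcal{T}$ is a triangulated subcategory, it is closed under shifts, finite direct sums and cones; hence the last term of an exact complex lies in $\mathcal{T}$ whenever all the others do. Running this argument for each value of the "transverse" coordinates $b_j$, $j \in I \setminus \{i\}$, in $[0,n]$ (and $b_j = 0$ for $j \notin I$) gives $\cO(a + e_i + b) \in \mathcal{T}$ for all $b \in [0,n]^I$; the symmetric Koszul complex resolving $\cO(-h)$ in terms of $\cO(0), \dots, \cO(nh)$ handles the $-e_i$ direction.

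The only genuine subtlety — and the step I would be most careful about — is bookkeeping the indices so that every intermediate line bundle that appears really does lie in the box $\{a + b : b \in [0,n]^I\}$ that is assumed to be in $\mathcal{T}$, rather than straying outside it; this is why one slides by a single unit at a time and re-applies the hypothesis after each step, rather than trying to jump directly. Everything else is formal: exactness of the Koszul complex is standard, and closure of a triangulated subcategory under the operations used is immediate from the axioms. I would also remark that this is the exact analogue, in the multi-graded setting, of the familiar fact that on $\mathbb{P}^n$ the subcategory generated by any $n+1$ consecutive line bundles $\cO(j), \dots, \cO(j+n)$ is all of $\cD(\mathbb{P}^n)$, applied one factor at a time.
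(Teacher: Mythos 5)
Your proof is correct, but it takes a genuinely different route from the paper. The paper's proof is a two-line reduction: for $a=0$ the set $\{\cO(b)\}_{b\in[0,n]^I}$ is the pullback of the full exceptional collection of Theorem~\ref{generate} on $X_I^n=\prod_{i\in I}\mathbb{P}^n$ along the projection $X_k^n\to X_I^n$, so $\mT$ contains the pullback of every line bundle on $X_I^n$ (generation is preserved by the exact pullback functor), and the general case follows by twisting by $\cO(a)$, which is an autoequivalence. You instead avoid invoking fullness altogether and work directly with the exact Koszul complex $0\to\cO\to V\otimes\cO(1)\to\cdots\to\Lambda^{n+1}V\otimes\cO(n+1)\to 0$ on a single factor, sliding the box $[0,n]^I$ one unit at a time in each coordinate direction; this is more self-contained (the paper's Theorem~\ref{generate} is only proved by reference to Samokhin) and makes explicit the mechanism that ultimately underlies the fullness statement, at the cost of more bookkeeping. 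The bookkeeping you flag as delicate is indeed where your write-up has a small slip: the complex twisted by $\cO(a-e_i)$ has terms with $i$-th shift in $\{-1,0,\dots,n\}$, so its \emph{first} term $\cO(a-e_i+b')$ is the new one and it handles the $-e_i$ direction, while the $+e_i$ direction needs the twist by $\cO(a+b'')$ with $b''_i=0$, producing the new term $\cO(a+(n+1)e_i+b'')$; since you correctly describe the one-step induction and note that a symmetric complex handles the opposite direction, this is only an indexing transposition and the argument goes through.
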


\begin{proof}
First assume $a = 0$. 
Then the collection $\{ \cO(b) \}_{b \in [0,n]^I}$ is just the pullback of the full exceptional collection in 
\begin{equation*}
X_I^n = \prod_{i\in I}\mathbb{P}^n
\end{equation*}
with respect to the natural projection $X_k^n \to X_I^n$.
Consequently, by Theorem~\ref{generate} the category $\mT$ contains the pullback of any line bundle on $X_I^n$, 
and this is just the claim of the lemma in this case.

For arbitrary $a$ just note that $\{ \cO(a + b) \}_{b \in [0,n]^I}$ is the twist of $\{ \cO(b) \}_{b \in [0,n]^I}$ by~$\cO(a)$.
Since a line bundle twist is an autoequivalence of $\cD(X_k^n)$, the general claim follows.
\end{proof}

\subsection{Semiorthogonal and Lefschetz decompositions}

In some cases it is slightly more convenient to work with semiorthogonal decompositions than with exceptional collections.
Here, we remind the corresponding definitions.

\begin{Def} Suppose $\mB_0, \dots, \mB_d$
are full triangulated subcategories of $\mT$ such that $\Hom(\mB_i, \mB_j) = 0$ for all $i > j$. 
We say that $\mB_0, \dots, \mB_d$ form a \textit{semiorthogonal decomposition} of $\mT$ if the smallest full triangulated subcategory of $\mT$ containing $\mB_i$ for all $i$ coincides with $\mT$. 
\end{Def}

We will denote a semiorthogonal decomposition by 
\begin{equation*}
\langle \mB_0, \dots, \mB_d \rangle = \mT.
\end{equation*}

Assume that $\mT = \dbcoh$ and a line bundle $\mathcal{L}$ on $X$ is given.
For an object $F$ in~$\cD(X)$ we denote
\begin{equation*}
F(i) := F \otimes \mathcal{L}^i
\end{equation*}
the image of $F$ under the autoequivalence of $\mT$ given by the $\mathcal{L}^i$-twist, 
and for a subcategory $\mA \subset \mT$ we denote
\begin{equation*}
\mA(i) := \{ F(i) \mid F \in \mA \} \subset \mT
\end{equation*}
the image of $\mA$ under this autoequivalence.

A semiorthogonal decomposition 
\begin{equation}
\label{Lefschetz}
\cD(X) = \langle \mA_0, \mA_1(1), \dots, \mA_d(d) \rangle
\end{equation}
is called \textit{Lefschetz decomposition} if $\mA_{i+1} \subset \mA_{i}$ for all $0 \le i < d$.

We say that a Lefschetz decomposition~\eqref{Lefschetz} 
is \textit{rectangular} if $\mA_0 = \dots = \mA_{d}$. 
A rectangular decomposition can be simply written as
\begin{equation}
\label{rectangular}
\cD(X) = \langle \mA, \mA (1), \dots, \mA(d) \rangle, 
\end{equation}
where $\mA = \mA_0$.

\subsection{Exceptional collections in equivariant derived categories}

Assume a finite group $G$ acts on a smooth projective variety $X$.
The following result of Alexei Elagin gives a way to construct an exceptional collection in the equivariant derived category $\cD_G(X)$.

\begin{Th}[{\cite[Theorem~2.3]{Elagin}}]
\label{theorem:elagin}
Assume that $E_1,\dots,E_r$ is a full $G$-invariant exceptional collection in $\dbcoh$, that is, the $G$-action induces a permutation of objects of the collection. Assume $s$ is the number of $G$-orbits on $\{E_1, \dots, E_r\}$ and let~$E_{i_1}, \dots E_{i_s}$, $i_1 < \dots <i_s$ be their representatives. For each $1 \le t \le r$ let $H_t$ be the stabilizer of $E_{i_t}$ and assume that for each $t$ the object $E_{i_t}$ admits an~$H_t$-equivariant structure. Then there exists a full exceptional collection of the equivariant category
\begin{equation*}
\cD_G(X) = \langle \be_{i_1}^{(1)},\dots,\be_{i_1}^{(m_1)},\dots,\be_{i_s}^{(1)},\dots,\be_{i_s}^{(m_s)} \rangle.
\end{equation*}
Here $\be_{i_t}^{(j)} = E_{i_t} \otimes V_t^{(j)}$, where $V_t^{(1)}$, \dots, $V_t^{(m_t)}$ are all irreducible representations of~$H_t$ up to isomorphism, and we consider the natural $G$-equivariant structure on $\be_{i_t}^{(j)}$. 
\end{Th}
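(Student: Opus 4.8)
The plan is to build the collection one $G$-orbit at a time, following the two-step strategy behind Elagin's theorem. The first step sets the stage: one needs the comparison equivalence $\cD_G(X) \simeq \cD(X)^G$ between the equivariant derived category and the equivariant category attached to the triangulated category $\cD(X)$ with its induced $G$-action (legitimate since we work over $\mathbb{C}$, so that $|G|$ is invertible). Granting this, everything takes place inside $\cD(X)^G$. The second step is a descent principle for semiorthogonal decompositions: if a triangulated category $\mathcal T$ with a $G$-action admits a semiorthogonal decomposition $\mathcal T = \langle \mathcal B_1, \dots, \mathcal B_r\rangle$ whose components are permuted by $G$, and $\mathcal B_{i_1}, \dots, \mathcal B_{i_s}$ with $i_1 < \dots < i_s$ is a system of orbit representatives with stabilizers $H_1, \dots, H_s$, then the equivariant category inherits a semiorthogonal decomposition
\[
\mathcal T^G \;=\; \bigl\langle\, \mathcal B_{i_1}^{H_1},\ \mathcal B_{i_2}^{H_2},\ \dots,\ \mathcal B_{i_s}^{H_s}\,\bigr\rangle,
\]
where $\mathcal B_{i_t}^{H_t}$ is the $H_t$-equivariant category of the block $\mathcal B_{i_t}$ (for the induced $H_t$-action), sitting inside $\mathcal T^G$ through an induction functor $\mathrm{Ind}_{H_t}^{G}$.

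I would then apply this with $\mathcal T = \cD(X)$ and $\mathcal B_j = \langle E_j\rangle$, the admissible subcategory generated by $E_j$; fullness and $G$-invariance of $E_1,\dots,E_r$ guarantee that these form a semiorthogonal decomposition of $\cD(X)$ permuted by $G$ exactly as $G$ permutes the $E_j$. Hence the orbit data is that of the statement, and descent gives $\cD_G(X) \simeq \langle \langle E_{i_1}\rangle^{H_1}, \dots, \langle E_{i_s}\rangle^{H_s}\rangle$. It remains to identify the blocks $\langle E_{i_t}\rangle^{H_t}$ and their embeddings.

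Fix an orbit representative, write $E = E_{i_t}$ and $H = H_t$. The block $\langle E\rangle$ is equivalent to $\cD^b(\mathrm{Vect}_{\mathbb{C}})$, and the induced $H$-action on it is \emph{trivial up to scalars}: it is governed by a class $\alpha \in H^2(H,\mathbb{C}^\times)$ (trivial coefficients) measuring the failure of the isomorphisms $h^*E \xrightarrow{\sim} E$, $h \in H$, to compose strictly. Thus $\langle E\rangle^{H}$ is the bounded derived category of $\alpha$-twisted representations of $H$. The hypothesis that $E$ admits an $H$-equivariant structure is precisely a trivialization of $\alpha$, whence $\langle E\rangle^{H} \simeq \cD^b(\mathrm{Rep}_{\mathbb{C}} H)$. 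Since $\mathbb{C}[H]$ is semisimple, $\mathrm{Rep}_{\mathbb{C}} H$ is a finite product of copies of $\mathrm{Vect}_{\mathbb{C}}$ indexed by the irreducibles $V_t^{(1)}, \dots, V_t^{(m_t)}$, so these form a full, completely orthogonal --- in particular exceptional in any order --- collection in $\langle E\rangle^H$. Transporting them along $\mathrm{Ind}_{H}^{G}$ produces the objects denoted $\be_{i_t}^{(j)} = E_{i_t}\otimes V_t^{(j)}$ in the statement, with their natural $G$-equivariant structure, and concatenating the orbit blocks over $t = 1,\dots,s$ in the given order yields the claimed full exceptional collection of $\cD_G(X)$.

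The step I expect to be the main obstacle is the descent principle. One must construct the induction functors $\mathrm{Ind}_{H_t}^{G}\colon \mathcal B_{i_t}^{H_t} \to \mathcal T^G$ --- gluing together the twisted copies $g^*\mathcal B_{i_t}$ over a system of coset representatives and carrying along the equivariant data --- then prove they are fully faithful, that their images are semiorthogonal in the order prescribed by $i_1 < \dots < i_s$ (here the $G$-invariance of the original decomposition is essential; it forces, among other things, that blocks in a common orbit are completely orthogonal), and that the images jointly generate $\mathcal T^G$. This is genuine homological bookkeeping with adjoint functors and mutations and constitutes the heart of Elagin's argument; by contrast, the reduction carried out above, once the descent principle and the comparison equivalence are in hand, is essentially formal.
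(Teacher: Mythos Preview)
The paper does not contain a proof of this statement: Theorem~\ref{theorem:elagin} is quoted verbatim from Elagin's work \cite[Theorem~2.3]{Elagin} and used as a black box. Your proposal is therefore not to be compared with anything in the present paper, but rather with Elagin's original argument. As a sketch of that argument it is accurate: the two pillars are exactly the comparison $\cD_G(X)\simeq \cD(X)^G$ (valid in characteristic zero) and the descent principle for $G$-invariant semiorthogonal decompositions, after which the identification of each block $\langle E_{i_t}\rangle^{H_t}$ with $\cD^b(\mathrm{Rep}_{\mathbb C}H_t)$ via the assumed $H_t$-equivariant structure on $E_{i_t}$ is the formal remainder. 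Your caveat that the descent step is the substantive part --- constructing the induction functors, checking full faithfulness, semiorthogonality, and generation --- is well placed; that is indeed where Elagin's work lies, and you have not supplied it here, only named it.
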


We note that any line bundle on $X = X_k^n$ has a natural equivariant structure with respect to the subgroup of $S_k$ that stabilizes it.
Indeed, for this it is enough to note that the line bundle $\cO(i,i,\dots,i)$ is $S_k$-equivariant for each $i$.
Thus, the above theorem applies to any exceptional collection formed by line bundles on $X_k^n$ as soon as it is $S_k$-invariant.
To ensure that the resulting collection in the equivariant category is Lefschetz we will use the following evident observation.

\begin{Cor}
Assume that $L$ is a $G$-equivariant line bundle on $X$ and $E_1,\dots,E_r$ is a Lefschetz exceptional collection with respect to $L$
which satisfies the assumptions of Theorem~\textup{\ref{theorem:elagin}}.
Then the corresponding exceptional collection in the equivariant category is also Lefschetz.
Moreover, if the original collection is rectangular then so is the equivariant one with the same number of blocks.
\end{Cor}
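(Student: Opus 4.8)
The plan is to exploit the single structural fact that an equivariant line bundle satisfies $g^*L\cong L$ for every $g\in G$, so that the autoequivalence $g^*$ commutes, up to isomorphism, with the twist functors $F\mapsto F(i)=F\otimes L^i$. Write the Lefschetz collection as $E_1,\dots,E_r$ with partition $r=r_0+\dots+r_d$, so that each object can be written $E_p\cong E_{\tau(p)}\otimes L^{\delta(p)}$ where $\delta(p)\in\{0,\dots,d\}$ is the block of the $p$-th object and $\tau(p)\le r_{\delta(p)}$ its position in the starting block. The heart of the argument is the \emph{claim} that, for every $g\in G$, the permutation $\pi$ of $\{1,\dots,r\}$ defined by $g^*E_p\cong E_{\pi(p)}$ satisfies $\delta\circ\pi=\delta$ and maps each $\{1,\dots,r_i\}$ onto itself; equivalently, that $g^*$ takes the $i$-th block to the $i$-th block and acts there through $\pi|_{\{1,\dots,r_i\}}$. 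Granting this, Elagin's Theorem will produce an equivariant collection whose block structure simply refines that of $E_1,\dots,E_r$.

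To prove the claim I would first note that the $r$ objects $E_1,\dots,E_r$ are pairwise non-isomorphic: if $E_p\cong E_q$ with $p<q$ then $\mathrm{id}_{E_p}$ is a nonzero class in $\Ext^\bullet(E_q,E_p)$, contradicting semiorthogonality. Consequently, two objects of the collection written as $E_\tau\otimes L^m$ and $E_{\tau'}\otimes L^{m'}$ (with $\tau\le r_m$, $\tau'\le r_{m'}$) are isomorphic only if $(\tau,m)=(\tau',m')$: assuming $m\ge m'$, the object $E_\tau\otimes L^{m-m'}\cong E_{\tau'}$ again lies in the collection because $\tau\le r_m\le r_{m-m'}$, so equality of positions forces $m=m'$ and $\tau=\tau'$. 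Now apply $g^*$ to $E_p\cong E_{\tau(p)}\otimes L^{\delta(p)}$: using $g^*L\cong L$ one gets $E_{\pi(p)}\cong(g^*E_{\tau(p)})\otimes L^{\delta(p)}\cong E_{\tau(\pi(\tau(p)))}\otimes L^{\delta(p)+\delta(\pi(\tau(p)))}$, and comparing with $E_{\pi(p)}\cong E_{\tau(\pi(p))}\otimes L^{\delta(\pi(p))}$ and invoking the uniqueness just established yields
\[
\delta(\pi(p))=\delta(p)+\delta(\pi(\tau(p)))\qquad\text{for all }p.
\]
Summing this identity around any cycle of $\pi$, the left-hand side telescopes to zero, while every term $\delta(\pi(\tau(p)))$ on the right is nonnegative; hence all these terms vanish and $\delta\circ\pi=\delta$. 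Since $\delta(t)=0$ and $\tau(t)=t$ for $t\le r_0$, this already gives $\pi(t)\le r_0$; feeding this back into the displayed identity inside block $i$ (again using uniqueness of objects) shows $\pi(t)\le r_i$ whenever $t\le r_i$, and the same reasoning applied to $g^{-1}$ upgrades these inclusions to bijections. The remaining assertions of the claim are then formal.

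With the claim in hand, the rest is bookkeeping with Theorem~\ref{theorem:elagin}. Each $G$-orbit on $\{E_1,\dots,E_r\}$ lies in a single block, and $E\mapsto E\otimes L^i$ identifies the orbits inside block $i$ with exactly those orbits inside the starting block that meet $\{1,\dots,r_i\}$; ordering the starting-block orbits $O_1,\dots,O_{s_0}$ by least index, these are $O_1,\dots,O_{s_i}$ for a nonincreasing sequence $s_0\ge s_1\ge\dots\ge s_d\ge 1$. Choosing the least-index element of each orbit as its representative in every block, and observing (once more from $g^*L\cong L$) that $\mathrm{Stab}_G(E\otimes L^i)=\mathrm{Stab}_G(E)$ and that an $H$-equivariant structure on $E$ together with the restriction to $H$ of the $G$-equivariant structure on $L^{\otimes i}$ gives one on $E\otimes L^i$, all hypotheses of Theorem~\ref{theorem:elagin} hold for the whole collection. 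The resulting full exceptional collection in $\cD_G(X)$ then breaks into $d+1$ consecutive blocks, the $i$-th being the $L^i$-twist of the first $R_i:=\sum_{l\le s_i}(\#\text{irreducible representations of }H_l)$ objects of the $0$-th block, where $H_l=\mathrm{Stab}_G(E_{\min O_l})$; because $s_0\ge\dots\ge s_d$, the associated subcategories satisfy $\mA^{\mathrm{eq}}_0\supseteq\mA^{\mathrm{eq}}_1\supseteq\dots\supseteq\mA^{\mathrm{eq}}_d$, so this is a Lefschetz collection with partition $(R_0,\dots,R_d)$. If the original partition is rectangular, $r_0=\dots=r_d$ forces $s_0=\dots=s_d$ and hence $R_0=\dots=R_d$, i.e.\ the equivariant collection is rectangular of length $d+1$.

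The only genuinely non-formal step is the claim, i.e.\ checking that the permutation of the collection induced by $G$ is compatible with the Lefschetz filtration; the telescoping argument against nonnegativity of $\delta$ is the crux, while everything downstream is a direct application of Elagin's Theorem together with the elementary isomorphisms $g^*(F\otimes L^i)\cong(g^*F)\otimes L^i$ and $(E\otimes V)\otimes L^i\cong(E\otimes L^i)\otimes V$.
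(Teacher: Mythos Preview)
Your overall architecture---isolate the claim that the $G$-action preserves the block filtration, then run Elagin's theorem block by block---is exactly what the paper has in mind, and once the claim is granted your downstream bookkeeping (orbits within blocks, stabilizers unchanged under $L$-twist, the resulting partition $(R_0,\dots,R_d)$) is correct and considerably more explicit than the paper's one-sentence proof.

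The gap is in the derivation of the identity $\delta(\pi(p))=\delta(p)+\delta(\pi(\tau(p)))$. Your uniqueness lemma requires \emph{both} presentations $E_\tau\otimes L^m$ to satisfy $\tau\le r_m$. For the canonical presentation $E_{\tau(\pi(p))}\otimes L^{\delta(\pi(p))}$ this holds by definition, but for the derived presentation $E_{\tau(\pi(\tau(p)))}\otimes L^{\delta(p)+\delta(\pi(\tau(p)))}$ you only know $\tau(\pi(\tau(p)))\le r_{\delta(\pi(\tau(p)))}$; since $\delta(p)+\delta(\pi(\tau(p)))\ge \delta(\pi(\tau(p)))$ and the $r_i$ are nonincreasing, the required bound $\tau(\pi(\tau(p)))\le r_{\delta(p)+\delta(\pi(\tau(p)))}$ does \emph{not} follow (indeed the exponent may even exceed $d$). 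Equivalently, you have not excluded an isomorphism $E_a\cong E_b\otimes L^m$ with $a,b\le r_0$, $m>0$, and $b>r_m$, and without this the telescoping step collapses. In the paper's intended application the objects are line bundles on $(\mathbb{P}^n)^k$ and the block index is $\min_i a_i$, an $S_k$-invariant, so the claim is immediate; for a general $X$ you would need an extra argument (for instance, exploit $\pi^N=\mathrm{id}$ together with fullness to rule out such ``phantom'' isomorphisms, or simply add it as a standing hypothesis).
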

\begin{proof}
Let $E_1,\dots,E_{r_0}$ be the starting block of the original Lefschetz collection and~$s_0$ be the number of $G$-orbits in the block $E_1,\dots,E_{r_0}$.
Then it is straightforward to check that~$\be_{i_1}^{(1)},\dots,\be_{i_1}^{(m_1)},\dots,\be_{i_{s_0}}^{(1)},\dots,\be_{i_{s_0}}^{(m_{s_0})}$  can serve as
the starting block of a Lefschetz collection in $\cD_G(X)$.
From the equivariance of $L$ it is also clear that the property of being rectangular is preserved by this construction.
\end{proof}

Thus, to construct a (rectangular) Lefschetz collection in $\cD_{S_k}(X_k^n)$ it is enough to construct 
a (rectangular) $S_k$-invariant Lefschetz collection in $\dbcoh$ consisting of line bundles.
This is what we do in the next sections.

\section{A Lefschetz collection and numerical minimality}
\label{A Lefschetz collection}

In this section we construct a Lefschetz $S_k$-invariant exceptional collection on $X_k^n$
and find some numerical conditions for minimality of a Lefschetz exceptional collection.
In what follows we always denote
\begin{equation*}
h := n + 1.
\end{equation*}

\subsection{A Lefschetz collection}

We consider the following two $S_k$-invariant subsets of the lattice $\operatorname{\mathrm{Pic}}(X_k^n) = \mathbb{Z}^k$:
\begin{align}
\label{eq:enkr}
\Enkr  &= \left\{S_k \cdot (c_1, \dots, c_k) \mid c_1 \ge \dots \ge c_k = 0 \text{ and } kc_i \le h(k-i) \right\},
\intertext{and}
\label{eq:enk}
\Enk &= \left\{S_k \cdot (c_1, \dots, c_k) \mid c_1 \ge \dots \ge c_k = 0 \text{ and } kc_i < h(k-i) \text{ for } i \not= k \right\}.
\end{align}
Note that the only difference in the definitions of $\Enk$ and $\Enkr$ is that a non-strict inequality in~\eqref{eq:enkr} 
is replaced by a strict one in~\eqref{eq:enk}.
In particular, 
\begin{equation*}
\Enk \subset \Enkr, 
\end{equation*}
and if all the fractions $h(k-i)/k$ for $1 \le i \le k-1$ are non-integer, i.e., when $h$ and~$k$ are coprime, we have an equality $\Enk = \Enkr$.

We consider the above two sets with the lexicographical order restricted from $\Z^k$.

\begin{Lem} \label{excep}
The set of line bundles $\cO(c)$ for $c \in \Enkr$ is an exceptional $S_k$-invariant collection
with respect to the lexicographical order on $\Enkr$.
\end{Lem}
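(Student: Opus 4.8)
The plan is to verify the two defining properties of an exceptional collection: that each $\cO(c)$ is exceptional (which is automatic by~\eqref{eq:ext-kunneth}, since every line bundle on $X_k^n$ is exceptional), and that the collection is semiorthogonal with respect to the lexicographic order, i.e.\ $\Ext^\bullet(\cO(c),\cO(c')) = 0$ whenever $c$ comes after $c'$ in the lexicographic order on $\Enkr$. The $S_k$-invariance of the collection is clear from the definition~\eqref{eq:enkr}, since the set $\Enkr$ is defined as a union of $S_k$-orbits. So the whole content is the semiorthogonality check.

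Here is how I would carry out the semiorthogonality step. Fix two elements of $\Enkr$; since the collection is $S_k$-invariant and we only need to compare pairs, write them as $a = \sigma \cdot c$ and $b = \tau \cdot c'$ where $c, c'$ are the dominant representatives (weakly decreasing, last coordinate $0$) satisfying $kc_i \le h(k-i)$ and similarly for $c'$. By~\eqref{eq:semiorthogonality}, I need: if $\cO(a)$ is lexicographically later than $\cO(b)$, then there is an index $j$ with $0 < a_j - b_j \le h - 1$. There are two cases. First, if $a$ and $b$ lie in the same $S_k$-orbit (so $c = c'$), then $a \ne b$ forces $a$ to be lexicographically later than $b$ — and I would use the standard fact that two distinct permutations of the same weakly decreasing vector always admit an index $j$ where $a_j > b_j$ and moreover $a_j - b_j$ is at most the total ``spread'' $c_1 - c_k = c_1$, then bound $c_1 \le h(k-1)/k < h$, giving $a_j - b_j \le h-1$. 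Second, if $c \ne c'$, I compare the dominant representatives; here the inequalities $kc_i \le h(k-i)$ and $kc'_i \le h(k-i)$ constrain how far apart coordinates of $a$ and $b$ can be, and I would again extract an index $j$ with $0 < a_j - b_j$ (using that $\cO(a)$ is later) and $a_j - b_j \le h - 1$ from the fact that both $c_1$ and $c'_1$ are at most $h(k-1)/k < h$, so no coordinate of $a$ exceeds a coordinate of $b$ by as much as $h$. In all cases it suffices to locate one good index.

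The cleanest way to organize the argument is probably to observe that every $a$ occurring satisfies $0 \le a_i \le c_1 \le \lfloor h(k-1)/k \rfloor \le h - 1$ for all $i$ (using $c_k = 0$ and monotonicity), so every coordinate of every line bundle in the collection lies in the interval $[0, h-1]$. Then for any two distinct vectors $a, b$ in the collection with $a$ lexicographically later than $b$: let $j$ be the first coordinate where they differ, so $a_1 = b_1, \dots, a_{j-1} = b_{j-1}$ and $a_j > b_j$. Since both $a_j, b_j \in [0, h-1]$ we get $0 < a_j - b_j \le h - 1 = n$, which is exactly~\eqref{eq:semiorthogonality}. This bypasses the case analysis entirely.

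The main obstacle — and the only place requiring genuine care — is verifying the uniform coordinate bound $a_i \le h - 1$ for every $a$ in the collection, i.e.\ that $c_1 \le h - 1$ for every dominant representative $c$ with $kc_i \le h(k-i)$. From $i = 1$ we get $kc_1 \le h(k-1)$, hence $c_1 \le h(k-1)/k = h - h/k < h$, and since $c_1$ is an integer, $c_1 \le h - 1$. This is elementary, so in fact the proof is short; the one subtlety is making sure the lexicographic comparison is applied to $a, b \in \Z^k$ (not to the dominant representatives), which is consistent with the statement since the order on $\Enkr$ is explicitly the restriction of the lexicographic order on $\Z^k$.
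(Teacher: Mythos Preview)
Your proof is correct and takes essentially the same approach as the paper: the key observation is that every element of $\Enkr$ has all coordinates in $[0,n]$, i.e.\ $\Enkr \subset [0,n]^k$. The paper's proof is just the one-line ``Follows from the evident inclusion $\Enkr \subset [0,n]^k$ and Theorem~\ref{generate}'', which packages your lexicographic-first-difference argument by invoking the already-established fact that $\{\cO(a)\}_{a\in[0,n]^k}$ is exceptional; you instead unpack that step via~\eqref{eq:semiorthogonality} directly, but the substance is identical.
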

\begin{proof}
Follows from the evident inclusion $\Enkr \subset [0,n]^k$ and Theorem~\ref{generate}.
\end{proof}

Since the set $\Enk$ is an $S_k$-invariant subset in $\Enkr$, the collection of line bundles~$\cO(c)$ for $c \in \Enk$ is also an exceptional $S_k$-invariant collection
with respect to the lexicographical order on $\Enk$.

We denote by
\begin{equation}
\label{eq:ma-hma}
\mA = \langle \cO(c) \rangle_{c \in \Enk}
\qquad\text{and}\qquad
\widehat{\mA} = \langle \cO(c) \rangle_{c \in \Enkr}
\end{equation}
the subcategories in $\cD(X_k^n)$ generated by the above exceptional collections.
Furthermore, for each $c = (c_1,c_2,\dots,c_k) \in \mathbb{Z}^k$ we denote
\begin{equation*}
c(i) = (c_1 + i, c_2 + i, \dots, c_k + i), 
\end{equation*}
so that $\cO(c(i)) \cong \cO(c) \otimes \cO(i,i,\dots,i)$.

\begin{Th}
\label{theorem:semiorthogonality}
For any $h > i > j \ge 0$ we have $\Hom (\mA(i),\widehat{\mA}(j))=0$.

In particular, the category
\begin{equation}
\label{collection}
\mT:=\langle \widehat\mA, \mA(1) \dots \mA(n) \rangle \subset \cD(X_k^n)
\end{equation}
is generated by an $S_k$-invariant Lefschetz collection.
\end{Th}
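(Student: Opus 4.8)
The plan is to reduce the vanishing $\Hom(\mA(i),\widehat{\mA}(j)) = 0$ for $h > i > j \ge 0$ to a purely combinatorial statement about the index sets $\Enk$ and $\Enkr$, and then verify that combinatorial statement. Since $\mA(i)$ is generated by the line bundles $\cO(c(i))$ with $c \in \Enk$ and $\widehat{\mA}(j)$ by the line bundles $\cO(c'(j))$ with $c' \in \Enkr$, it suffices to show $\Ext^\bullet(\cO(c(i)),\cO(c'(j))) = 0$ for all such $c,c'$. After twisting by $\cO(-j,\dots,-j)$ and setting $m := i - j$, so that $1 \le m \le h-1 = n$, this becomes: for every $c \in \Enk$ and every $c' \in \Enkr$ one has $\Ext^\bullet(\cO(c(m)),\cO(c')) = 0$. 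By the criterion~\eqref{eq:semiorthogonality} this is equivalent to finding, for each pair $(c,c')$, an index $\ell \in \{1,\dots,k\}$ with $0 < c_\ell + m - c'_\ell \le n$.

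The first step is to set up notation so that $c$ and $c'$ may be taken in the fundamental domain: we may assume $c_1 \ge \dots \ge c_k = 0$ with $kc_i < h(k-i)$ for $i \ne k$, and similarly $c'_1 \ge \dots \ge c'_k = 0$ with $kc'_i \le h(k-i)$ — but one must be careful, because the $S_k$-orbits are taken independently, so after choosing representatives there is still a relative permutation $\sigma \in S_k$ to contend with, i.e., we really need an index $\ell$ with $0 < c_{\sigma(\ell)} + m - c'_\ell \le n$ for the \emph{worst} $\sigma$. The natural approach is: since we get to choose $\ell$, it suffices to show that there exists $\ell$ with $c'_\ell - m \le c_{\sigma(\ell)} \le c'_\ell - m + n = c'_\ell + (h - m) - 1$, and because $1 \le m \le h-1$ we have $1 \le h - m \le h - 1$ as well, so the target interval $[c'_\ell - m,\, c'_\ell - m + n]$ is an interval of $h$ consecutive integers. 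So the claim reduces to: for any two tuples $c \in \Enk$, $c' \in \Enkr$ (in fundamental-domain form) and any $\sigma \in S_k$ and any $m$ with $1 \le m \le n$, some value $c_{\sigma(\ell)}$ lands in the length-$h$ integer interval centered appropriately around $c'_\ell$.

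The heart of the argument is a counting/pigeonhole estimate using the defining inequalities $kc_i < h(k-i)$ and $kc'_i \le h(k-i)$. These say precisely that the number of indices $i$ with $c_i \ge t$ is at most $k - \lceil kt/h \rceil$ (strictly), and likewise for $c'$ with $\le$; equivalently, the "staircase" of $c$ lies strictly below the line of slope $-h/k$ and that of $c'$ lies weakly below it. The plan is to suppose for contradiction that for \emph{every} $\ell$ the value $c_{\sigma(\ell)}$ avoids the interval $[c'_\ell - m,\, c'_\ell - m + n]$, i.e., $c_{\sigma(\ell)} \le c'_\ell - m - 1$ or $c_{\sigma(\ell)} \ge c'_\ell + h - m$. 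Summing, or rather comparing the sorted sequences $(c_i)$ and $(c'_\ell)$ via the two halves of this dichotomy, should force the staircase of $c$ to either lie too high (contradicting $kc_i < h(k-i)$ together with $m \le n$ forcing a shift that still respects the bound) or too low. Concretely I would order the $\ell$'s, track how many fall in the "$\ge c'_\ell + h - m$" case versus the "$\le c'_\ell - m - 1$" case, and use the slope-$h/k$ bounds on both tuples to derive that the count on one side exceeds what is allowed — the key inputs being that $c_k = c'_k = 0$ (which pins the bottom) and that the strict inequality for $c$ gives exactly the slack needed to rule out the boundary case when $h \mid h(k-i)$. The main obstacle is exactly this combinatorial lemma: handling the arbitrary relative permutation $\sigma$ cleanly, and extracting the contradiction in the borderline case $m = n$ (where the interval is $[c'_\ell - n, c'_\ell]$) and in the case where $h$ and $k$ share a common factor (so $\Enk \subsetneq \Enkr$), which is precisely where the asymmetry between the strict and non-strict inequalities in~\eqref{eq:enkr} and~\eqref{eq:enk} is used. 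Once the $\Hom$-vanishing is established, the "in particular" clause is immediate: the inclusions $\mA(i+1) \subset \mA(i)$ hold because $\Enk$ is invariant under $c \mapsto c$ (the blocks are literally equal up to twist in the rectangular body, with $\widehat\mA \supset \mA$ at the start), the semiorthogonality of the whole chain $\langle \widehat\mA, \mA(1),\dots,\mA(n)\rangle$ is exactly what we proved, and $S_k$-invariance of each block follows from Lemma~\ref{excep} and the $S_k$-invariance of $\Enk$, $\Enkr$, and of $\cO(i,\dots,i)$.
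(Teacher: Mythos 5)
Your reduction to the combinatorial statement is correct and matches the paper's setup: one restricts to $j=0$, uses \eqref{eq:semiorthogonality}, and must produce for each $a\in\Enk$, $b\in\Enkr$ and $0<i<h$ an index $\ell$ with $0< a_\ell+i-b_\ell\le n$. But the core of the proof — the combinatorial lemma itself — is not actually proved in your proposal. You describe a contradiction/counting strategy ("summing\dots should force the staircase\dots", "I would order the $\ell$'s, track how many fall in each case\dots") and then explicitly flag it as "the main obstacle". That obstacle is precisely the theorem; without carrying out the estimate, handling the relative permutation $\sigma$, and nailing down the borderline cases you mention, the argument is incomplete.

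For comparison, the paper closes this gap with a short direct pigeonhole, not a global contradiction. Normalize only $a$ (so $a_1\ge\dots\ge a_k=0$) and leave $b$ as an arbitrary element of the $S_k$-invariant set $\Enkr$. If $a_1+i<h$, take $\ell$ with $b_\ell=0$ (such $\ell$ exists since every element of $\Enkr$ has a zero coordinate) and conclude at once. Otherwise let $r$ be maximal with $a_r+i\ge h$; the strict inequality $ka_r<h(k-r)$ defining $\Enk$ gives $i>hr/k$. On the other side, the inequalities $kb_{(s)}\le h(k-s)$ defining $\Enkr$ show that at least $r+1$ coordinates of $b$ satisfy $b_\ell\le hr/k$, so at least one such $\ell$ has $\ell\ge r+1$, whence $a_\ell+i\le a_{r+1}+i<h$ while $a_\ell+i\ge i>hr/k\ge b_\ell$, giving $0<a_\ell+i-b_\ell<h$. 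Note that this chooses the witnessing index $\ell$ in one step, avoids any relative permutation bookkeeping, and isolates exactly where the strict inequality for $\Enk$ versus the non-strict one for $\Enkr$ is used — the point you correctly anticipated but did not resolve. Your write-up would become a complete proof if you replace the sketched counting argument with an argument of this kind.
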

\begin{proof}
Obviously, it is enough to prove the theorem for $j=0$, $i>0$.
In other words, it is enough to prove that for any $a \in \Enk$, $b \in \Enkr$ we have $\Hom(\cO(a(i)),\cO(b)) = 0$.
Furthermore, by $S_k$-invariance of the set $\Enk$, we can assume that $a_1 \ge \dots \ge a_k = 0$.

First, assume that $a_1 + i < h$.
Then $a_t + i < h$ for all $t$.
On the other hand, by definition of $\Enkr$ we have $b_t = 0$ for some $t$.
Then $0 < (a_t + i) - b_t \le n$, hence we have~$\Hom(\cO(a(i)),\cO(b)) = 0$ by~\eqref{eq:semiorthogonality}.

So, from now on we can assume that $a_1 + i \ge h$.
At the same time $a_k + i = i < h$.
Let $r$ be the maximal index such that
\begin{equation*}
a_r + i \ge h
\qquad \text{and} \qquad
a_{r+1} + i < h.
\end{equation*}
The first of these inequalities implies 
\begin{equation*}
i \ge h - a_r > h - h(k-r)/k = hr/k.
\end{equation*}
On the other hand, consider all $t$ such that
\begin{equation*}
b_t \le hr/k.
\end{equation*}
Note that by definition of~$\Enkr$ there are at least $r + 1$ such $t$ (corresponding to the smallest $r+1$ values of $b_t$),
hence for some of these we have $t \ge r + 1$.
For such $t$ we have 
\begin{equation*}
hr/k < i \le a_t + i \le a_{r+1} + i < h
\qquad \text{and} \qquad
0 \le b_t \le hr/k.
\end{equation*}
In particular, $0 < a_t + i - b_t < h$.
Hence $\Hom(\cO(a(i)),\cO(b)) = 0$ by~\eqref{eq:semiorthogonality}.
\end{proof}

Below we will prove that the category $\mT$ defined by \eqref{collection}  is equal to $\cD(X_k^n)$ in case 
$n \not= 2 \text{ mod } 3$,~$k = 3$ (Subection~\ref{3p}) and $n = 1$ and any $k$ (Subection~\ref{n=1}). In particular, for $\mathrm{gcd}(h,k)=1$ the right side of \eqref{collection} gives a rectangular Lefschetz decomposition of $\cD(X_k^n)$.

However, in general the sum of the ranks of the Grothendieck groups of the components of \eqref{collection} is less than the rank of the Grothendieck group of $\cD(X_k^n)$, so it requires a modification. In Subsection~\ref{x_3^2} we show how such  a modification can be performed for $n=2$ and $k=3$.

\subsection{Numerical restrictions} \label{Numerical_restrictions}

We keep the notation $h=n+1$ and let $V$ be a vector space of dimension $h$, so that~$\mathbb{P} (V) = \mathbb{P}^n$.
Denote by
\begin{equation*}
\K := K_0 (\mathbb P(V)) \otimes \mathbb C,
\end{equation*}
the complexified Grothendieck group of coherent sheaves on $\mathbb{P}(V)$.
It is also a vector space of dimension $h$.
Moreover, we have
\begin{equation*}
K_0(X_k^n) \otimes \mathbb{C} = K_0 (\mathbb P(V)^{k}) \otimes \mathbb C \cong \K^{\otimes k}.
\end{equation*}

The group $\operatorname{\mathrm{GL}}(\K)$ acts naturally on the vector space $\K^{ \otimes k}$,
and the group~$S_k$ acts on $\K^{ \otimes k}$ by permutation of factors (this action is induced by the action of $S_k$ on~$X_k^n$). 
These two actions commute, therefore $\K^{ \otimes k}$ is a~$(\operatorname{\mathrm{GL}}(\K), S_k)$-bimodule. 
In the next lemma we describe a decomposition of $\K^{ \otimes k}$ into a direct sum of irreducible representations, provided by the Schur--Weyl duality.

We denote by $\rho(h,k)$ the set of all Young diagrams of $k$ boxes with at most $h$ rows, 
by $\Sigma^{\lambda} \K$ the irreducible representation of $\operatorname{\mathrm{GL}}(\K)$ corresponding to the Young diagram~$\lambda$,
(it is also known as the Schur functor assoicated with $\lambda$), 
and by~$R_{\lambda^T}$ the irreducible representation of $S_k$ corresponding to the transposed Young diagram~$\lambda^T$.

\begin{Lem} [Schur--Weyl duality, \cite{Fulton Harris}] \label{decomp} 

There exists an isomorphism of~$\operatorname{\mathrm{GL}}(\K) \times S_k$ representations:
\begin{equation*}
\K^{ \otimes k} = \underset{\lambda \in \rho(h,k)}{\bigoplus} \Sigma^{\lambda} \K \otimes R_{\lambda^T}.
\end{equation*} 
In other words, the decomposition of $\K^{ \otimes k}$ into a direct sum of irreducible $S_k$-representations 
contains~$\dim (\Sigma^{\lambda} \K)$ copies of the irreducible representation $R_{\lambda^T}$.
\end{Lem}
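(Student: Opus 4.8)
The plan is to deduce the statement from the double centralizer theorem applied to the two commuting actions on $\K^{\otimes k}$. Write $A \subset \operatorname{End}(\K^{\otimes k})$ for the subalgebra generated by the operators $g^{\otimes k}$, $g \in \operatorname{GL}(\K)$, and $B \subset \operatorname{End}(\K^{\otimes k})$ for the image of the group algebra $\mathbb{C}[S_k]$ acting by permutation of tensor factors. Commutativity of the two actions gives $B \subseteq \operatorname{End}_A(\K^{\otimes k})$ at once; the substantive point is the reverse inclusion $\operatorname{End}_{\operatorname{GL}(\K)}(\K^{\otimes k}) \subseteq B$, that is, every $\operatorname{GL}(\K)$-equivariant endomorphism of $\K^{\otimes k}$ is a linear combination of coordinate permutations. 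I would prove this in the standard way: identify $\operatorname{End}_{\operatorname{GL}(\K)}(\K^{\otimes k})$ with the space of $\operatorname{GL}(\K)$-invariants in $\operatorname{End}(\K)^{\otimes k} = (\K \otimes \K^\vee)^{\otimes k}$, and invoke the first fundamental theorem of invariant theory for $\operatorname{GL}$ (equivalently, a polarization argument together with Zariski density of $\{g^{\otimes k}\}$ in the symmetric tensors of $\operatorname{End}(\K)$), which says precisely that these invariants are spanned by the tensors realizing the elements of $S_k$.

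Granting this, since $\mathbb{C}[S_k]$ is semisimple by Maschke's theorem, so is its quotient $B$; the double centralizer theorem then shows $A = \operatorname{End}_B(\K^{\otimes k})$ is semisimple as well, that $A$ and $B$ are each other's centralizers, and that as an $A \otimes B$-module
\begin{equation*}
\K^{\otimes k} \;\cong\; \bigoplus_{i} M_i \otimes N_i,
\end{equation*}
where $M_i$ runs over the pairwise non-isomorphic simple $A$-modules occurring in $\K^{\otimes k}$ and $N_i = \operatorname{Hom}_A(M_i, \K^{\otimes k})$ is the matching simple $B$-module, this being a bijection between the simple constituents of $A$ and of $B$ that appear. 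It then remains to identify both sides. For a Young diagram $\lambda$ with $k$ boxes let $c_\lambda \in \mathbb{C}[S_k]$ be the associated Young symmetrizer; applying $c_\lambda$ to $\K^{\otimes k}$ realizes the Schur functor $\Sigma^\lambda \K$, which is nonzero (and then $\operatorname{GL}(\K)$-irreducible) exactly when $\lambda$ has at most $\dim \K = h$ rows, i.e.\ $\lambda \in \rho(h,k)$, while applying $c_\lambda$ inside $\mathbb{C}[S_k]$ cuts out the corresponding irreducible $S_k$-representation. Matching the two Young-symmetrizer descriptions identifies the constituent of $B$ paired with $\Sigma^\lambda \K$ as the $S_k$-irreducible labelled by $\lambda^T$ in the conventions of the paper (the transpose enters because the Schur functor used here symmetrizes along the columns of $\lambda$), which gives the asserted decomposition.

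An alternative, entirely character-theoretic route bypasses the double centralizer theorem: compute the bicharacter of $\K^{\otimes k}$, namely its trace at a pair $(g,\sigma)$ with $\sigma$ of cycle type $\mu$, which equals the power-sum symmetric polynomial $p_\mu$ evaluated at the eigenvalues of $g$; then substitute the Frobenius formula $p_\mu = \sum_\lambda \chi^\lambda(\mu)\, s_\lambda$ together with the fact that the Schur polynomial $s_\lambda$ is the character of $\Sigma^\lambda \K$, and compare coefficients of the $s_\lambda$ to read off the multiplicity of each $S_k$-irreducible. I expect the main obstacle to be the reverse centralizer inclusion in the first approach — equivalently, in the second, the bookkeeping that converts $\chi^\lambda$ into the representation labelled $\lambda^T$ under this paper's normalization of the Schur functor — while everything else is formal; the statement is, of course, classical, cf.\ \cite{Fulton Harris}.
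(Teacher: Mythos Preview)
Your sketch is a correct and standard route to Schur--Weyl duality, but there is nothing to compare it against: the paper does not prove this lemma at all. It is stated as a classical result with a citation to \cite{Fulton Harris}, and the proof you outline (double centralizer plus the first fundamental theorem, or alternatively the Frobenius character formula) is essentially the argument one finds there.
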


The above decomposition allows to give a simple necessary condition for the existence of a rectangular $S_k$-invariant Lefschetz collection in $\cD(X_k^n)$.
In what follows we call it the \emph{divisibility criterion}.

\begin{Cor}
% [Divisibility criterion] 
\label{div h} 
If a rectangular $S_k$-invariant Lefschetz decomposition of length $h$ of~$\cD(X_k^n)$ exists, then $h$ divides $\dim \Sigma^\lambda\K$ for all $\lambda \in \rho(h,k)$.
\end{Cor}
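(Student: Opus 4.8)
The plan is to extract numerical information from a hypothetical rectangular Lefschetz decomposition by passing to Grothendieck groups and then exploiting the Schur--Weyl decomposition of Lemma~\ref{decomp} together with the $S_k$-equivariance of the twist functor. Suppose $\cD(X_k^n) = \langle \mA, \mA(1), \dots, \mA(h-1)\rangle$ is a rectangular $S_k$-invariant Lefschetz decomposition. Apply $K_0(-)\otimes\C$ to this decomposition. On the level of complexified Grothendieck groups a semiorthogonal decomposition induces a direct sum decomposition, so if we set $A := K_0(\mA)\otimes\C \subset \K^{\otimes k}$ we get
\begin{equation*}
\K^{\otimes k} = A \oplus \tau(A) \oplus \dots \oplus \tau^{h-1}(A),
\end{equation*}
where $\tau$ denotes the operator on $\K^{\otimes k}$ induced by the twist $\otimes\,\cO(1,1,\dots,1)$. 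In particular $\dim_\C \K^{\otimes k} = h\cdot\dim_\C A$, which already recovers the coarse divisibility $h \mid h^k$, but we need the finer statement.

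The key point is that both the twist $\tau$ and the subspace $A$ are compatible with the $S_k$-action, since the collection is $S_k$-invariant and the line bundle $\cO(1,\dots,1)$ is $S_k$-equivariant; moreover $\tau$ commutes with the $S_k$-action (it is even $\operatorname{\mathrm{GL}}(\K)$-equivariant, being multiplication by the class of $\cO(1)$ in each tensor factor simultaneously). Hence the displayed decomposition is a decomposition of $S_k$-representations, and $\tau$ carries $A$ isomorphically onto each $\tau^j(A)$ as $S_k$-modules. Therefore, as $S_k$-representations,
\begin{equation*}
\K^{\otimes k} \cong A^{\oplus h}.
\end{equation*}
Now decompose both sides into isotypic components. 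By Lemma~\ref{decomp}, the multiplicity of the irreducible $S_k$-representation $R_{\lambda^T}$ in $\K^{\otimes k}$ is exactly $\dim\Sigma^\lambda\K$ for $\lambda\in\rho(h,k)$ (and those are all the irreducibles that occur). Comparing multiplicities of $R_{\lambda^T}$ on the two sides, we get $\dim\Sigma^\lambda\K = h\cdot m_\lambda$, where $m_\lambda$ is the multiplicity of $R_{\lambda^T}$ in $A$, a non-negative integer. Hence $h \mid \dim\Sigma^\lambda\K$ for every $\lambda\in\rho(h,k)$, which is the claim.

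The only thing requiring a little care — and the step I expect to be the main obstacle — is the justification that the decomposition $\K^{\otimes k} = \bigoplus_j \tau^j(A)$ is genuinely a decomposition of $S_k$-modules with $\tau$ giving $S_k$-equivariant isomorphisms between the summands. This needs: (i) that an $S_k$-invariant semiorthogonal decomposition of $\cD(X_k^n)$ induces an $S_k$-stable direct sum decomposition of $K_0\otimes\C$ — which follows because the projection functors onto the components of a semiorthogonal decomposition are canonical, hence commute with the autoequivalences given by the $S_k$-action when the decomposition is $S_k$-invariant; and (ii) that $\tau$ is $S_k$-equivariant — clear since $\cO(1,\dots,1)$ is an $S_k$-equivariant line bundle, so tensoring by it commutes with pullback along permutations. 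Granting these two points, the character/multiplicity comparison above is routine linear algebra, and no further input is needed.
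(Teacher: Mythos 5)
Your proposal is correct and follows essentially the same route as the paper: pass to complexified Grothendieck groups, observe that the rectangular decomposition gives $\K^{\otimes k}\cong (K_0(\mA_0)\otimes\C)^{\oplus h}$ as $S_k$-representations, and compare multiplicities of each $R_{\lambda^T}$ using Lemma~\ref{decomp}. You spell out the $S_k$-equivariance of the twist functor more explicitly than the paper does, but the argument is the same.
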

\begin{proof}
Assume $\cD(X_k^n) = \langle \mA_0, \mA_0(1), \dots, \mA_0(n) \rangle$ is a rectangular $S_k$-invariant Lefschetz decomposition.
Then we have
\begin{equation*}
\K^{ \otimes k} = K_0(X_k^n) \otimes \mathbb{C} = (K_0(\mA_0) \otimes \mathbb{C})^{\oplus h}.
\end{equation*}
Since $\mA_0$ is $S_k$-invariant, $K_0(\mA_0) \otimes \mathbb{C} \subset K_0(X_k^n) \otimes \mathbb{C}$ is an $S_k$-subrepresentation, 
so the above equality shows that the multiplicity of each irreducible summand of $\K^{ \otimes k}$ is divisible by $h$.
\end{proof}

The same argument as above gives the following bound for the ranks of the Grothendieck groups of components of an arbitrary $S_k$-invariant Lefschetz decomposition of $\cD(X_k^n)$.
Denote by $\lfloor t \rfloor$ and $\lceil t \rceil$ the lower and upper integral parts of~$t$.

\begin{Cor} \label{Lef}
Suppose $\cD(X_k^n) = \langle \mA_0, \mA_1(1), \dots, \mA_n(n) \rangle$ is a Lefschetz $S_k$-invariant decomposition. 
Let $r_i$ be the rank of $K_0(\mA_i)$. 
Then 
\begin{equation*}
r_0 \ge \underset{\lambda \in \rho(h,k)}{\sum} \left\lceil\frac {\dim \Sigma^{\lambda} \K} {h}\right\rceil \dim  R_{\lambda^T} 
\quad\text{and}\quad
r_n \le \underset{\lambda \in \rho(h,k)}{\sum} \left\lfloor\frac {\dim \Sigma^{\lambda} \K} {h}\right\rfloor \dim  R_{\lambda^T}.
\end{equation*}
\end{Cor}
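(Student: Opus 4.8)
The plan is to imitate the argument of Corollary~\ref{div h}, replacing the divisibility statement by the appropriate inequalities on multiplicities. Write $\K^{\otimes k} = \bigoplus_{\lambda \in \rho(h,k)} \Sigma^{\lambda}\K \otimes R_{\lambda^T}$ as in Lemma~\ref{decomp}, so that the multiplicity of the irreducible $S_k$-representation $R_{\lambda^T}$ in $K_0(X_k^n)\otimes\C$ is exactly $\dim \Sigma^{\lambda}\K$. For each $i$, the subcategory $\mA_i$ is $S_k$-invariant, hence $M_i := K_0(\mA_i)\otimes\C$ is an $S_k$-subrepresentation of $K_0(X_k^n)\otimes\C$; write $m_i(\lambda)$ for the multiplicity of $R_{\lambda^T}$ in $M_i$. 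Because $\mA_n(n) \subset \mA_{n-1}(n-1) \subset \dots \subset \mA_0$ gives, after untwisting (twisting by the $S_k$-equivariant line bundle $\cO(i,\dots,i)$ is an $S_k$-equivariant autoequivalence, so it preserves multiplicities of $S_k$-representations), a chain of $S_k$-subrepresentations $M_n \subseteq M_{n-1} \subseteq \dots \subseteq M_0$, we get $m_n(\lambda) \le m_{n-1}(\lambda) \le \dots \le m_0(\lambda)$ for every $\lambda$. On the other hand, the semiorthogonal decomposition $\cD(X_k^n) = \langle \mA_0,\mA_1(1),\dots,\mA_n(n)\rangle$ yields $K_0(X_k^n)\otimes\C = \bigoplus_{i=0}^n M_i$ as $S_k$-representations, so for every $\lambda$
\begin{equation*}
\sum_{i=0}^{n} m_i(\lambda) = \dim \Sigma^{\lambda}\K.
\end{equation*}

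Now fix $\lambda$. From the two displayed relations, the largest term $m_0(\lambda)$ is at least the average $\dim\Sigma^{\lambda}\K / h$ (with $h = n+1$), and since it is an integer, $m_0(\lambda) \ge \lceil \dim\Sigma^{\lambda}\K / h\rceil$; symmetrically the smallest term satisfies $m_n(\lambda) \le \lfloor \dim\Sigma^{\lambda}\K / h\rfloor$. Finally $r_i = \operatorname{rk} K_0(\mA_i) = \dim M_i = \sum_{\lambda \in \rho(h,k)} m_i(\lambda)\,\dim R_{\lambda^T}$, so summing the pointwise bounds over $\lambda$ gives
\begin{equation*}
r_0 \ge \sum_{\lambda \in \rho(h,k)} \left\lceil \frac{\dim \Sigma^{\lambda}\K}{h}\right\rceil \dim R_{\lambda^T}
\quad\text{and}\quad
r_n \le \sum_{\lambda \in \rho(h,k)} \left\lfloor \frac{\dim \Sigma^{\lambda}\K}{h}\right\rfloor \dim R_{\lambda^T},
\end{equation*}
which is the assertion.

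The only genuinely non-formal point is the justification that $M_n \subseteq \dots \subseteq M_0$ as $S_k$-representations: this uses that the Lefschetz condition $\mA_{i+1}\subseteq \mA_i$ is compatible with the $S_k$-action and that untwisting by $\cO(i,\dots,i)$ identifies $K_0(\mA_i(i))$ with $K_0(\mA_i)$ $S_k$-equivariantly — i.e. that the chain of inclusions of triangulated subcategories descends to a chain of inclusions of the corresponding $S_k$-subrepresentations in $K_0$. Everything else is the elementary observation that among $n+1$ nonnegative integers summing to $N$ and arranged in (weakly) decreasing order, the first is $\ge \lceil N/(n+1)\rceil$ and the last is $\le \lfloor N/(n+1)\rfloor$, applied multiplicity-by-multiplicity and then summed against $\dim R_{\lambda^T}$. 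I do not expect any real obstacle here; the proof is a direct refinement of Corollary~\ref{div h}.
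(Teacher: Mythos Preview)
Your argument is correct and essentially identical to the paper's: decompose each $K_0(\mA_i)\otimes\C$ into $S_k$-irreducibles, use the Lefschetz inclusions to get monotone multiplicities summing to $\dim\Sigma^\lambda\K$, and apply the elementary ceiling/floor bound for a decreasing sequence of $h$ integers with given sum. One small slip in phrasing: the Lefschetz condition already gives $\mA_n\subset\cdots\subset\mA_0$ directly (the twisted components $\mA_i(i)$ are semiorthogonal, not nested), so no ``untwisting'' is needed --- but your conclusion $M_n\subseteq\cdots\subseteq M_0$ is correct either way.
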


\begin{proof}
Suppose that 
\begin{equation*}
K_0(\mA_i) \otimes \mathbb{C}=\underset{\lambda \in \rho(h,k)}{\sum} R_{\lambda^T}^{\oplus a^\lambda_i}.
\end{equation*}

From \ref{decomp} we get that $\underset{0\le i \le n}{\sum} a^\lambda_i=\dim \Sigma^{\lambda} \K$ for any $\lambda$.
Since $\mA_j \subset \mA_i$ for any~$i<j$, we have~$a^\lambda_j \le a^\lambda_i$ for any $\lambda$ and $i<j$. Thus
\begin{equation*}
  a_0^\lambda \ge \left\lceil\frac {\dim \Sigma^{\lambda} \K} {h}\right\rceil
\quad\text{and}\quad
a_n^\lambda \le \left\lfloor\frac {\dim \Sigma^{\lambda} \K} {h}\right\rfloor.  
\end{equation*}

This completes the proof. 
\end{proof}

As an example we consider the case $n=1$ and $k = 2m$.

\begin{Cor} \label{restriction for k=2m}
Suppose $\cD(X_{2m}^1)) = \langle \mA_0, \mA_1(1) \rangle$ is a Lefschetz $S_{2m}$-invariant decomposition. 
Let $r_i$ be the rank of $K_0(\mA_i)$. Then $r_0 -r_1 \ge \binom {2m} {m}$.
\end{Cor}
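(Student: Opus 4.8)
The plan is to read off the inequality directly from Corollary~\ref{Lef}. Here $h = n+1 = 2$, so $\K$ is two-dimensional and $\rho(h,k) = \rho(2,2m)$ consists exactly of the Young diagrams
\begin{equation*}
\lambda_j = (2m-j,\,j), \qquad 0 \le j \le m.
\end{equation*}
For such $\lambda_j$ the Schur functor of a two-dimensional space has dimension $\dim \Sigma^{\lambda_j}\K = (2m-j) - j + 1 = 2m - 2j + 1$, which is odd; hence $h = 2$ never divides $\dim \Sigma^{\lambda_j}\K$, and
\begin{equation*}
\left\lceil \frac{\dim \Sigma^{\lambda_j}\K}{h} \right\rceil - \left\lfloor \frac{\dim \Sigma^{\lambda_j}\K}{h} \right\rfloor = 1 \qquad \text{for all } j.
\end{equation*}
Subtracting the two estimates of Corollary~\ref{Lef} (with $n = 1$, so that $r_n = r_1$) then yields
\begin{equation*}
r_0 - r_1 \ \ge \ \sum_{j=0}^{m} \left( \left\lceil \tfrac{\dim \Sigma^{\lambda_j}\K}{h} \right\rceil - \left\lfloor \tfrac{\dim \Sigma^{\lambda_j}\K}{h} \right\rfloor \right) \dim R_{\lambda_j^T} \ = \ \sum_{j=0}^{m} \dim R_{\lambda_j^T}.
\end{equation*}

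It then remains to compute this sum. Tensoring with the sign character gives $R_{\lambda_j^T} \cong R_{\lambda_j} \otimes \mathrm{sgn}$, so $\dim R_{\lambda_j^T} = \dim R_{\lambda_j}$, and $\dim R_{(2m-j,\,j)}$ equals the number of standard Young tableaux of shape $(2m-j,\,j)$, which by the ballot formula (equivalently, by the hook length formula) is $\binom{2m}{j} - \binom{2m}{j-1}$. The sum therefore telescopes:
\begin{equation*}
\sum_{j=0}^{m} \dim R_{\lambda_j^T} \ = \ \sum_{j=0}^{m} \left( \binom{2m}{j} - \binom{2m}{j-1} \right) \ = \ \binom{2m}{m},
\end{equation*}
with the convention $\binom{2m}{-1} = 0$. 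Combining with the previous display gives $r_0 - r_1 \ge \binom{2m}{m}$.

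There is no real obstacle here: the statement is a clean numerical specialization of Corollary~\ref{Lef}, and the only points needing a line of justification are the identification of $\rho(2,2m)$ and the telescoping identity for the ballot numbers. One can also view this through Corollary~\ref{div h}: none of the numbers $2m - 2j + 1$ is divisible by $h = 2$, so no rectangular $S_{2m}$-invariant Lefschetz decomposition of $\cD(X_{2m}^1)$ exists at all, and the estimate above quantifies how far from rectangular any $S_{2m}$-invariant Lefschetz decomposition must be.
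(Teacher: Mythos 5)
Your proof is correct and follows essentially the same route as the paper: both apply Corollary~\ref{Lef} to the diagrams $(2m-j,j)$, observe that $\dim\Sigma^{(2m-j,j)}\K = 2m-2j+1$ is odd so each ceiling-minus-floor contributes $1$, and reduce to computing $\sum_j \dim R_{\lambda_j^T}$. The only (cosmetic) difference is in evaluating that sum --- you use the telescoping ballot-number identity $\dim R_{(2m-j,j)} = \binom{2m}{j}-\binom{2m}{j-1}$, which is arguably cleaner than the paper's manipulation of $\binom{2m+1}{l}$ sums.
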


In Subsection \ref{n=1} we will show that the above inequality is sharp.

\begin{proof}
Any diagram in $\rho(2,2m)$ is of the shape
\begin{equation*}
\lambda(l) := (2m-l,l). 
\end{equation*}
for some $0 \le l \le m$. 
By Weyl dimension formula we have
\begin{equation*}
\dim (\Sigma^{\lambda(l)} \K) = 2m-2l+1,
\end{equation*}
and by the hook-length formula 
\begin{equation*}
\dim R_{\lambda(l)^T} = \frac{2m!\,(2m-2l+1)} {(2m-l+1)!\,l!} = \frac{2m - 2l + 1}{2m + 1} \binom{2m + 1}{l}.
\end{equation*}
For each $l$ we have $\lceil \frac{2m - 2l + 1}2 \rceil - \lfloor \frac{2m - 2l + 1}2 \rfloor = 1$, hence by Corollary~\ref{Lef}, we have
\begin{align*}
r_0 - r_1 \ge \sum_{l=0}^m  \dim  R_{\lambda(l)^T} 
&= \sum_{l=0}^m \frac{2m - 2l + 1}{2m + 1} \binom{2m + 1}{l} \\
&= \sum_{l=0}^m \binom{2m + 1}{l} - 2 \sum_{l=0}^m \binom{2m}{l - 1}.
\end{align*}
The first sum is equal to $2^{2m}$, and the second is equal to $2^{2m} - \binom{2m}{m}$, so we conclude that $r_0 - r_1 \ge \binom{2m}{m}$.
\end{proof}

If we restrict to the case of $S_k$-invariant Lefschetz collections, the inequalities of Corollary \ref{Lef} can be, in general, improved,
because in this case each $K_0(\mA_i)$ is a \emph{permutation representation} of $S_k$.

As an example, we consider the case $k = 3$, $n = 2$ (so that $h = 3$).
In this case
the set $\rho(3,3)$ consists of three Young diagrams: $(3)$, $(2,1)$, and $(1,1,1)$, and 
\begin{equation*}
\dim \Sigma^{(3)}\K = 10,
\qquad
\dim \Sigma^{(2,1)}\K = 8,
\qquad
\dim \Sigma^{(1,1,1)}\K = 1,
\end{equation*}
while
\begin{equation*}
\dim R_{(3)^T} = 1,
\qquad 
\dim R_{(2,1)^T} = 2,
\qquad 
\dim R_{(1,1,1)^T} = 1.
\end{equation*}
Consequently, if $\dbcoh = \langle \mA_0, \mA_1(1), \mA_2(2) \rangle$ is an $S_k$-invariant Lefschetz decomposition and $r_i$ is the rank of $K_0(\mA_i)$, then by Corollary~\ref{Lef} we have
\begin{align*}
r_0 &\ge \left\lceil\frac {10} {3}\right\rceil \cdot 1 + \left\lceil\frac {8} {3}\right\rceil \cdot 2 + \left\lceil\frac {1} {3}\right\rceil \cdot 1 
= 4+6+1=11\\
\intertext{and} 
r_2 &\le \left\lfloor\frac {10} {3}\right\rfloor \cdot 1 + \left\lfloor\frac {8} {3}\right\rfloor \cdot 2 + \left\lfloor\frac {1} {1}\right\rfloor \cdot 1 
= 3 + 4 + 0 = 7.
\end{align*}

On the other hand, we can prove the following result.

\begin{Prop} 
\label{proposition:x32-r0-lower-bound}
Assume $\langle \mA_0,\mA_1(1),\mA_2(2)\rangle=\cD(X_3^2)$ is a Lefschetz decomposition, 
such that each component $\mA_i$ is generated by an $S_3$-invariant exceptional collection~$\{E_{i,j}\}_{j=1}^{r_i}$. Then $r_0 \ge 13$ and $r_2 \le 7$.
\end{Prop}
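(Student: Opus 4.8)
The plan is to sharpen the crude rank estimate of Corollary~\ref{Lef} --- which already gives $r_0\ge 11$ and $r_2\le 7$ --- using the precise shape of an $S_3$-invariant exceptional collection, a bound coming from the equivariant category, and the Euler pairing $\chi(E,F)=\sum_i(-1)^i\dim\Ext^i(E,F)$ on $K_0(X_3^2)$. Write $V_i:=K_0(\mA_i)\otimes\C$. Since $S_3$ permutes the objects $E_{i,j}$, each $V_i$ is a permutation representation of $S_3$; the $V_i$ decompose $K_0(X_3^2)\otimes\C$ (after the harmless twists by $\cO(i,i,i)$), so $r_0+r_1+r_2=27$, and the multiplicity of every $S_3$-irreducible in $V_i$ is non-increasing in $i$ because $\mA_2\subset\mA_1\subset\mA_0$. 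By Lemma~\ref{decomp} (or a direct character computation on $(\C^3)^{\otimes3}$) one has $K_0(X_3^2)\otimes\C\cong\mathbf{1}^{\oplus10}\oplus\mathbf{2}^{\oplus8}\oplus\mathrm{sgn}$, where $\mathbf{1},\mathrm{sgn},\mathbf{2}$ are the irreducibles of $S_3$.

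I would first note that any $S_3$-orbit inside an $S_3$-invariant exceptional collection consists of \emph{pairwise completely orthogonal} exceptional objects: $\Ext^\bullet(E,E')=\Ext^\bullet(E',E)=0$ for distinct $E,E'$ in one orbit. Indeed, some transposition $\tau$ interchanges $E$ and $E'$ (any odd permutation, for an orbit of size $2$; the transposition fixing the third member, for an orbit of size $3$), and since $\tau^2=1$ while one of $\Ext^\bullet(E,E')$, $\Ext^\bullet(E',E)$ already vanishes by the exceptional ordering, applying $\tau$ kills the other. Thus an orbit of size $m$ generates a copy of $\cD(\mathrm{pt})^{\oplus m}$ and its classes in $K_0$ are $\chi$-orthonormal; writing $N^{(i)}_m$ for the number of size-$m$ orbits among $\{E_{i,j}\}$, the multiplicities of $\mathbf{1},\mathbf{2},\mathrm{sgn}$ in $V_i$ are $N^{(i)}_1+N^{(i)}_2+N^{(i)}_3+N^{(i)}_6$, $N^{(i)}_3+2N^{(i)}_6$ and $N^{(i)}_2+N^{(i)}_6$, and $r_i=N^{(i)}_1+2N^{(i)}_2+3N^{(i)}_3+6N^{(i)}_6$.

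Next I localise $\mathrm{sgn}$: it has total multiplicity $1$, so by the non-increasing condition it lies in $V_0$, hence $\mA_0$ contains exactly one orbit meeting the sign part, of size $2$ or $6$. To exclude size $2$, consider the Euler form on the rank-one sign-isotypic sublattice of $K_0(X_3^2)$: in the basis $\{[\cO(a)]\}_{a\in\{0,1,2\}^3}$ it is generated by the primitive vector $\omega=\sum_{\sigma\in S_3}\mathrm{sgn}(\sigma)\,\sigma\cdot[\cO(0,1,2)]$, and by the K\"unneth factorisation~\eqref{eq:ext-kunneth} of $\chi$ together with the fact that the Euler matrix of $\mathbb{P}^2$ has determinant $1$, one computes $\chi(\omega,\omega)=6$. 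A size-$2$ orbit $\{E,E'\}$ would make $[E]-[E']$ a primitive generator of $\chi$-self-value $2\neq6$, a contradiction, whereas a size-$6$ orbit $\{D_g\}$ gives $\sum_g\mathrm{sgn}(g)[D_g]$, of self-value $6$. Hence $N^{(0)}_6=1$, $N^{(1)}_6=N^{(2)}_6=0$, and then $N^{(i)}_2=0$, $\sum_iN^{(i)}_1=3$, $\sum_iN^{(i)}_3=6$, so $r_0=N^{(0)}_1+3N^{(0)}_3+6$. Two more inputs narrow this down: the non-increasing condition on $\mathbf{2}$-multiplicities, $N^{(0)}_3+2\ge N^{(1)}_3\ge N^{(2)}_3$ with $\sum_iN^{(i)}_3=6$, forces $N^{(0)}_3\ge1$; and by Elagin's Theorem~\ref{theorem:elagin} and the Corollary following it, $\cD_{S_3}(X_3^2)$ carries a Lefschetz exceptional collection with three blocks of sizes $\rho_0\ge\rho_1\ge\rho_2$ summing to $\dim K_0(\cD_{S_3}(X_3^2))=3\cdot3+6\cdot2+1\cdot1=22$, whence $\rho_0=3N^{(0)}_1+2N^{(0)}_3+1\ge\lceil22/3\rceil=8$. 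Together these leave, among configurations with $r_0\le12$, only the case $N^{(0)}_3=1$ (with $N^{(0)}_1\in\{2,3\}$).

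The final and hardest step is to rule out $N^{(0)}_3=1$, i.e.\ to show $\mA_0$ contains at least two size-$3$ orbits; this is the only point where the quadratic form $\chi$ itself, and not merely ranks and multiplicities, must be used. The size-$6$ orbit contributes to the standard-representation isotypic sublattice $L\subset K_0(X_3^2)$ --- inside $V_0$ --- a sublattice isometric to $A_2\oplus A_2$, while each size-$3$ orbit contributes a copy of the root lattice $A_2$; the plan is to analyse how these pieces must glue inside the fixed lattice $L$, subject to the block-triangular shape of $\chi$ along the full exceptional collection and to the nesting $K_0(\mA_2)\subset K_0(\mA_1)\subset K_0(\mA_0)$, and to show that a single $A_2$ alongside $A_2\oplus A_2$ in $V_0$ --- forcing $N^{(1)}_3=3$, $N^{(2)}_3=2$ --- cannot occur, so that $r_0\ge13$. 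Carrying out this lattice/gluing computation is where I expect the real difficulty to lie; the bound $r_2\le7$ is already contained in Corollary~\ref{Lef}.
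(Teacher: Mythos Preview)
Your proposal is \emph{incomplete}: you explicitly leave the decisive step --- ruling out $N^{(0)}_3=1$ --- as a ``lattice/gluing computation'' that you only sketch and do not carry out. Without it your argument yields at best $r_0\ge 11$, not $r_0\ge 13$. The extra machinery you introduce to narrow the gap is also problematic: invoking Elagin's Theorem requires each $E_{i,j}$ to admit an equivariant structure for its stabiliser, and the Lefschetz shape on the equivariant side (needed for $\rho_0\ge\rho_1\ge\rho_2$) requires the three exceptional collections to be nested --- neither hypothesis is part of the statement. So the inequality $\rho_0\ge 8$ is not available to you.

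By contrast, the paper's argument is a single stroke that you nearly reached and then walked past. Once the sign representation is located in $V_0$ (total multiplicity~$1$, non-increasing) and once size-$2$ orbits are excluded --- your Euler-form computation $\chi(\omega,\omega)=6\neq 2$ is exactly the right way to do this, and in fact fills a point the paper leaves implicit --- the unique orbit carrying $\mathrm{sgn}$ must be the regular one, of size~$6$, sitting inside the collection for $\mA_0$ but absent from that for $\mA_1$. This gives $r_0\ge r_1+6\ge r_2+6$, whence
\[
3r_0\ \ge\ r_0+(r_1+6)+(r_2+6)\ =\ 27+12\ =\ 39,
\]
so $r_0\ge 13$; and $2r_2\le r_1+r_2=27-r_0\le 14$ gives $r_2\le 7$. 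No Elagin, no $A_2$-lattices, no case split on $N^{(0)}_3$. Keep your Euler-form elimination of the $\C[S_3/A_3]$ case --- that part is genuinely useful --- and replace everything after it with this three-line count.
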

\begin{proof}
The classes of exceptional objects $E_{i,j}$ form a basis of the Grothendieck group~$K_0(\mA_i)$.
Since the collection is $S_3$-invariant, this basis is permuted by the group action, i.e., $K_0(\mA_i) \otimes \C$ is a sum of permutation representations.
There are three such representations:
\begin{equation*}
\C[S_3] \cong R_{(3)^T} \oplus R_{(2,1)^T}^{\oplus 2} \oplus R_{(1,1,1)^T},
\quad 
\C[S_3/S_2] \cong R_{(3)^T} \oplus R_{(2,1)^T},
\quad 
\C[S_3/S_3] \cong R_{(3)^T}.
\end{equation*}
Note that $R_{(1,1,1)^T}$ only appears as a summand of $\C[S_3]$.

On the other hand, by Lemma~\ref{decomp} we have
\begin{equation*}
K_0(\mA_0) \oplus K_0(\mA_1) \oplus K_0(\mA_2) \cong K_0(\cD(X_3^2)) \cong R_{(3)^T}^{\oplus 10} \oplus R_{(2,1)^T}^{\oplus 8} \oplus R_{(1,1,1)^T}.
\end{equation*}
Finally, by the Lefschetz property, we have $K_0(\mA_2) \subset K_0(\mA_1) \subset K_0(\mA_0)$.
This means that $R_{(1,1,1)}$ has to be a direct summand of $K_0(\mA_0)$, 
hence $K_0(\mA_0)$ contains the entire regular representation $\C[S_3]$, and implies $r_0 \ge r_1 + 6 \ge r_2 + 6$.
Therefore
\begin{equation*}
3r_0 \ge r_0 + (r_1+6) +(r_2 +6) 
= 27+6+6=39,
\end{equation*}
and hence $r_0 \ge 13$.

Since 
\begin{equation*}
2 r_2 \le r_1+r_2 = 27-r_0 \le 27-13= 14,
\end{equation*}
we have $r_2 \le 7$.
\end{proof}

In Section \ref{x_3^2} we will construct a full $S_3$-invariant Lefschetz exceptional  collection in $\cD(X_3^2)$ with $(r_0,r_1,r_2)=(13,7,7)$.

\subsection{Verifications of divisibility}

To check divisibility of the dimensions of $\Sigma^\lambda\K$ the following corollary of Littlewood--Richardson rule is useful.

\begin{Lem} [\cite{Fulton Harris}] \label{L-R}
 Let $\mu = (\mu_1, \mu_2, \dots, \mu_m)$ be a 
Young diagram. Then
\begin{equation*}
\Lambda^{\mu_1}\K \otimes \Lambda^{\mu_2}\K \otimes \dots \otimes \Lambda^{\mu_m}\K \cong 
\Sigma^{\mu^T} \K \oplus \left(\underset{\lambda < {\mu^T}}{\bigoplus} ({\Sigma^{\lambda} \K})^{{\oplus}c(\lambda, \mu)}\right),
\end{equation*}
where $<$ stands for the \textit{dominance order} \textup{\cite[Section~2.2]{Fulton}}, and $c(\lambda,\mu)$ are nonnegative integers.
\end{Lem}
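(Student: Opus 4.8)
The plan is to reduce the statement to an identity of $GL(\K)$-characters and then read off the answer from the classical Schur expansion of a product of elementary symmetric polynomials.

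First I would use that $\Lambda^{j}\K = \Sigma^{(1^{j})}\K$, so that the $GL(\K)$-character of $\Lambda^{j}\K$ is the elementary symmetric polynomial $e_{j}$ in $\dim\K$ variables, and hence the character of $\Lambda^{\mu_{1}}\K \otimes \dots \otimes \Lambda^{\mu_{m}}\K$ is $e_{\mu_{1}}\cdots e_{\mu_{m}} = e_{\mu}$. Finite-dimensional representations of $GL(\K)$ are semisimple and are determined up to isomorphism by their characters, and the characters $s_{\lambda}$ of the $\Sigma^{\lambda}\K$ (over $\lambda$ with at most $\dim\K$ rows) are linearly independent; so it suffices to prove the symmetric-function identity $e_{\mu} = \sum_{\lambda} c(\lambda,\mu)\, s_{\lambda}$ with $c(\mu^{T},\mu)=1$ and $c(\lambda,\mu)=0$ unless $\lambda$ lies strictly below $\mu^{T}$ in the dominance order.

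Next I would invoke the standard expansion (a form of the dual Pieri rule, see \cite{Fulton Harris})
\[
e_{\mu} \;=\; \sum_{\lambda} K_{\lambda^{T}\mu}\, s_{\lambda},
\]
where $K_{\nu\mu}$ denotes the Kostka number, i.e.\ the number of semistandard Young tableaux of shape $\nu$ and content $\mu$; alternatively this follows by induction on $m$, each step being the Pieri rule $s_{\nu}\cdot e_{j} = \sum_{\lambda} s_{\lambda}$ over the $\lambda$ obtained from $\nu$ by adjoining a vertical strip of $j$ boxes. I would then apply the two classical facts about Kostka numbers (\cite{Fulton}): $K_{\nu\mu}=0$ unless $\nu \trianglerighteq \mu$ in dominance order (with $|\nu|=|\mu|$), and $K_{\mu\mu}=1$. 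Taking $\nu=\lambda^{T}$, this shows that $c(\lambda,\mu):=K_{\lambda^{T}\mu}$ vanishes unless $\lambda^{T}\trianglerighteq\mu$; since conjugation is an order-reversing involution on the dominance poset, $\lambda^{T}\trianglerighteq\mu$ is equivalent to $\lambda \trianglelefteq \mu^{T}$, and the extreme term $\lambda=\mu^{T}$ appears with coefficient $K_{\mu\mu}=1$. This is precisely the claimed decomposition, with all $c(\lambda,\mu)$ nonnegative integers (being Kostka numbers, equivalently representation multiplicities).

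Since everything here is classical, there is no genuine obstacle; the one point that needs care is the dominance-order bookkeeping, namely that conjugation reverses dominance, so that the Kostka triangularity $\lambda^{T}\trianglerighteq\mu$ translates into the stated $\lambda \trianglelefteq \mu^{T}$ and the distinguished Schur summand is $\Sigma^{\mu^{T}}\K$ rather than $\Sigma^{\mu}\K$. A minor and harmless issue is that $\Sigma^{\lambda}\K$ vanishes as soon as $\lambda$ has more than $\dim\K = h$ rows; such $\lambda$ then contribute nothing, consistently with $s_{\lambda}$ vanishing identically in $h$ variables, so the character identity and the representation-theoretic statement match after discarding those $\lambda$.
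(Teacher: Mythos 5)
The paper states this lemma as a quoted corollary of the Littlewood--Richardson rule and gives no proof, only the citation to \cite{Fulton Harris}. Your argument is correct and is precisely the standard derivation that the paper's framing alludes to: reduce to characters, expand $e_{\mu_1}\cdots e_{\mu_m}=\sum_\lambda K_{\lambda^T\mu}\,s_\lambda$ by the dual Pieri rule, and use the dominance triangularity $K_{\nu\mu}=0$ unless $\nu\trianglerighteq\mu$ together with $K_{\mu\mu}=1$ and the fact that conjugation reverses dominance. You also correctly handle the only point requiring care, namely that $\Sigma^{\lambda}\K$ and $s_\lambda$ in $\dim\K$ variables vanish simultaneously when $\lambda$ has too many rows, so the character identity and the representation-theoretic statement agree.
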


The next proposition gives some necessary and sufficient conditions for divisibility.

\begin{Prop} \label{result} 
$(1)$ If $h$ divides $k$, then $\dim \Sigma^\lambda\K$ is not divisible by $h$ for some Young diagram $\lambda \in \rho(h,k)$.

\noindent$(2)$ If $k$ is not divisible by $h$ and for any integer  $r$ such that $1 \le r \le \mathrm{min}(k, h-1)$ 
the binomial coefficient~$\binom {h}{r}$ is divisible by $h$, then $\dim (\Sigma^{\lambda} \K)$ is divisible by $h$ for any Young diagram $\lambda \in \rho(h,k)$.
\end{Prop}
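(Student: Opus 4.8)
The plan is to dispatch part (1) directly and prove part (2) by an induction built on Lemma~\ref{L-R}. For part (1), since $h \mid k$ I would simply exhibit the rectangular Young diagram $\lambda = \bigl((k/h)^h\bigr)$, which has $k$ boxes and exactly $h$ rows, hence lies in $\rho(h,k)$; for a vector space $\K$ of dimension $h$ the corresponding Schur functor is $\Sigma^\lambda\K = (\det\K)^{\otimes k/h}$, which is one-dimensional, and since $h = n+1 \ge 2$ its dimension $1$ is not divisible by $h$.

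For part (2) I would prove the slightly more flexible statement that, under the stated hypotheses, $h \mid \dim\Sigma^\lambda\K$ for \emph{every} partition $\lambda$ with $1 \le |\lambda| \le k$ and $h \nmid |\lambda|$ (adopting the convention $\Sigma^\lambda\K = 0$ when $\lambda$ has more than $h$ rows); specializing to $|\lambda| = k$ then gives the proposition. The induction is primarily on the number of boxes $|\lambda|$ and, for fixed $|\lambda|$, downward on the dominance order. Writing $\mu = \lambda^T$, Lemma~\ref{L-R} gives
\begin{equation*}
\Lambda^{\mu_1}\K \otimes \dots \otimes \Lambda^{\mu_m}\K \cong \Sigma^{\lambda}\K \oplus \bigoplus_{\nu < \lambda}(\Sigma^{\nu}\K)^{\oplus c(\nu,\mu)},
\end{equation*}
where every $\nu$ satisfies $|\nu| = |\lambda|$; taking dimensions reduces the claim to showing that $h$ divides both $\prod_j \binom{h}{\mu_j}$ and each $\dim\Sigma^{\nu}\K$ occurring on the right.

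The execution splits according to the number of rows. If $\lambda$ has more than $h$ rows then $\Sigma^\lambda\K = 0$. If $\lambda$ has exactly $h$ rows I would strip its $h$-th row: subtracting $\lambda_h \ge 1$ from every row yields $\tilde\lambda$ with at most $h-1$ rows and $\Sigma^\lambda\K = \Sigma^{\tilde\lambda}\K \otimes (\det\K)^{\otimes\lambda_h}$, so $\dim\Sigma^\lambda\K = \dim\Sigma^{\tilde\lambda}\K$; since $|\tilde\lambda| = |\lambda| - h\lambda_h$ is a strictly smaller positive non-multiple of $h$, the box-count induction hypothesis applies. Hence one may assume $\lambda$ has at most $h-1$ rows; then every column length $\mu_j$ satisfies $1 \le \mu_j \le h-1$ and $\mu_j \le |\lambda| \le k$, so $\mu_j \le \min(k,h-1)$ and the hypothesis gives $h \mid \binom{h}{\mu_j}$, whence $h \mid \prod_j\binom{h}{\mu_j}$. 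For the correction terms $\nu < \lambda$: those with more than $h$ rows contribute $0$; those with exactly $h$ rows are handled by the same row-stripping reduction (their box count equals $|\lambda|$ before stripping but drops afterwards, so the box-count induction hypothesis applies); and those with at most $h-1$ rows lie strictly below $\lambda$ in dominance with the same box count, so the dominance induction hypothesis applies. This closes the induction.

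I expect the main obstacle to be exactly the bookkeeping that turns this into a clean induction: the Littlewood--Richardson correction terms $\nu$ need not have at most $h-1$ rows even when $\lambda$ does, so one cannot run a pure dominance induction inside a fixed ``height stratum''. What rescues the argument is that row-stripping always strictly decreases the number of boxes while preserving non-divisibility of the box count by $h$, so the two induction parameters fit together; the remaining ingredients (the determinant twist relating $\Sigma^\lambda\K$ and $\Sigma^{\tilde\lambda}\K$, and $\dim\Lambda^r\K = \binom{h}{r}$) are standard.
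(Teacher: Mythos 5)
Your proof is correct: part (1) is exactly the paper's argument (the rectangular diagram with $h$ rows and $k/h$ columns, giving $(\det\K)^{\otimes k/h}$ of dimension $1$), and part (2) rests on the same two ingredients, Lemma~\ref{L-R} and induction with respect to the dominance order. The difference is in how the induction is organized. You prove a stronger statement (for all box counts $\le k$ not divisible by $h$) and run a double induction on box count and dominance, because in your main case you reduce to diagrams with at most $h-1$ rows so that \emph{every} column length lies in $[1,h-1]$ and \emph{every} factor $\binom{h}{\mu_j}$ is divisible by $h$; this forces the separate treatment of diagrams with exactly $h$ rows by stripping the bottom row via $\Sigma^{\lambda}\K\cong\Sigma^{\tilde\lambda}\K\otimes(\det\K)^{\otimes\lambda_h}$. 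The paper avoids this dichotomy entirely: it runs a single ascending dominance induction on $\rho(h,k)$ and observes that only \emph{one} factor of the product $\prod_i\binom{h}{\mu_i}$ needs to be divisible by $h$, which holds because $h\nmid k$ forces at least one column of any $\mu^T\in\rho(h,k)$ to have length in $[1,h-1]$ --- whether or not $\mu^T$ also has full columns of height $h$. (Correction terms with more than $h$ rows contribute zero Schur functors since $\dim\K=h$, a point you make explicit and the paper leaves implicit.) Both arguments are valid; the paper's is shorter, while yours is slightly more general and spells out the determinant-twist reduction the paper never needs.
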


\begin{proof}
(1)
Suppose $k = ht$. 
Consider the Young diagram $\xi$ with $t$ columns of height~$h$. 
Then $\Sigma^\xi \K \cong (\det \K)^{\otimes t}$, hence $\dim (\Sigma^{\xi} \K) = 1$. 
Since $h = n+1 \ge 2$, we see that~$\dim (\Sigma^{\xi} \K)$ is not divisible by $h$.

(2) We use ascending induction on Young diagrams in $\rho(h,k)$ with respect to the dominance order.

{\it{Base.}} Suppose $k = h t + r$, where $t\in  \mathbb {Z}_{\ge 0}$ and $1 \le r \le h - 1$. 
It is clear that the smallest diagram $\omega \in \rho(h,k)$ is the diagram with $t$ columns of height $h$ and one column of height $r$. 
Then $\Sigma^\omega \K \cong (\det \K)^{\otimes t} \otimes \Lambda^r \K$, hence $\dim (\Sigma^{\omega} \K) = \binom {h}{r}$,
which is divisible by $h$ by the assumption of the proposition.

{\it{Induction step.}} Consider a diagram $\mu$ such that $\mu^T\in \rho(h,k)$. 
Suppose that for any $\lambda<\mu^T$, $\lambda \in \rho(h,k)$, the dimension~$\dim (\Sigma^{\lambda} \K)$ is divisible by $h$. 
Let us prove that $\dim (\Sigma^{\mu^T} \K)$ is also divisible by $h$. 
Using Lemma~\ref{L-R}, we get

\begin{multline*}
\dim (\Sigma^{\mu^T} \K)= \prod\limits_{i=1}^m {\dim (\Lambda^{\mu_i}\K)} - \dim \left(\underset{\lambda < {\mu^T}}{\bigoplus} ({\Sigma^{\lambda} \K})^{{\oplus}c(\lambda)} \right) = \\ 
= \prod\limits_{i=1}^m \textstyle \binom {h}{\mu_i} - \sum\limits_{\lambda < {\mu^T}} c(\lambda, \mu) \cdot \dim ({\Sigma^{\lambda} \K}).
\end{multline*}

By induction hypothesis, $\sum\limits_{\lambda < {\mu^T}} c(\lambda, \mu) \cdot \dim ({\Sigma^{\lambda} \K})$ is divisible by~$h$. 
Since $k$ is not divisible by $h$ we see that there exist $i$ such that $1 \le \mu_i \le h-1$. 
Clearly, $\mu_i \le k$. 
Therefore,~$1 \le \mu_i \le \min(k, h-1)$. 
Thus $\binom {h} {\mu_i}$ is divisible by $h$ by the assumption of the theorem. 
Hence $\prod\limits_{i=1}^m \textstyle \binom {h}{\mu_i}$ 
and consequently
$\dim (\Sigma^{\mu^T} \K)$ is divisible by~$h$.
\end{proof}

Note that to prove the inductive step we need only one $\binom {h} {\mu_i}$ to be divisible by $h$ for each $\mu=(\mu_1, \mu_2, \dots, \mu_m)$ with $\mu^T\in \rho(h,k)$. 
This suggests that the assumption of Theorem~\ref{result}(2) can be weakened.

Next, we discuss consequences of the above in the case $k = 3$.

\begin{Prop} \label{k=3} 
If $n = 3p + 2$, then the category $\cD(X_3^n)$ does not have a rectangular $S_3$-invariant Lefschetz decomposition of length $n + 1$.
\end{Prop}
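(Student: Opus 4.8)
The plan is to reduce everything to the divisibility criterion of Corollary~\ref{div h}: since a rectangular $S_3$-invariant Lefschetz decomposition of length $h=n+1$ would force $h$ to divide $\dim\Sigma^{\lambda}\K$ for \emph{every} $\lambda\in\rho(h,3)$, it suffices to exhibit a single $\lambda\in\rho(h,3)$ with $h\nmid\dim\Sigma^{\lambda}\K$.

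First I would record that $h=n+1=3p+3=3(p+1)$, so $3\mid h$ and $h\ge 3$; in particular all three partitions of $3$ — namely $(3)$, $(2,1)$ and $(1,1,1)$ — lie in $\rho(h,3)$. I would then take $\lambda=(2,1)$. Since $\Sigma^{(2,1)}\K$ occurs in $\K^{\otimes 3}$ (with multiplicity $2$, by Schur--Weyl duality, Lemma~\ref{decomp}), its dimension is easily computed, e.g.\ via the Weyl dimension formula or by subtracting $\dim S^3\K=\binom{h+2}{3}$ and $\dim\Lambda^3\K=\binom{h}{3}$ from $h^3$ and halving:
\[
\dim\Sigma^{(2,1)}\K=\frac{(h-1)h(h+1)}{3}.
\]
Writing $h=3q$ with $q=p+1\ge 1$, this becomes $q(h-1)(h+1)=q(h^2-1)$. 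If $h=3q$ divided $q(h^2-1)$, then cancelling $q$ would give $3\mid h^2-1$; but $h\equiv 0\pmod 3$ yields $h^2-1\equiv -1\pmod 3$, a contradiction. Hence $h\nmid\dim\Sigma^{(2,1)}\K$, and Corollary~\ref{div h} rules out a rectangular $S_3$-invariant Lefschetz decomposition of length $n+1$.

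There is no genuine obstacle here — the argument is a one-line divisibility check — so the only points requiring a little care are confirming that $(2,1)$ really belongs to $\rho(h,3)$ (true as soon as $h\ge 2$) and getting $\dim\Sigma^{(2,1)}\K$ right. One could equally well run the argument with $\lambda=(1,1,1)$, since $\dim\Sigma^{(1,1,1)}\K=\binom{h}{3}$ and $\binom{h}{3}/h=\binom{h-1}{2}/3$ fails to be an integer when $3\mid h$. Note that Proposition~\ref{result} does not apply off the shelf: part~(1) needs $h\mid k$ (which holds only for $p=0$) and part~(2) produces divisibility rather than its failure, so the explicit computation above is the natural route.
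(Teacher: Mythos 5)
Your proof is correct and follows essentially the same route as the paper: both invoke the divisibility criterion of Corollary~\ref{div h} together with the dimension formulas for the Schur functors $\Sigma^{\lambda}\K$ with $\lambda$ a partition of $3$. The only (harmless) difference is that the paper tabulates all three diagrams and observes divisibility holds if and only if $3\nmid h$, whereas you exhibit the single failing diagram $(2,1)$ (or $(1,1,1)$), which is all the statement requires.
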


\begin{proof} If $n=3p+2$, then $n>1$ and $h=n+1 \ge 3$. 
Thus for $k=3$ all Young diagrams of three boxes are in $\rho(h,k)$. 
These diagrams are $(1,1,1)$, $(2,1)$, and $(3)$.
The dimensions of the corresponding Schur functors are given by 
\begin{align*}
\dim \Sigma^{(3)}\K &= \textstyle\frac{(h+2)(h+1)h}{6},\\
\dim \Sigma^{(2,1)}\K &= \textstyle\frac{(h+1)h(h-1)}{3},\\
\dim \Sigma^{(1,1,1)}\K &= \textstyle\frac{h(h-1)(h-2)}{6}.
\end{align*}
(see for instance the dimension formula from~\cite[Exercise~6.4]{Fulton Harris}).

Thus, a necessary condition for the existence of a rectangular $S_3$-invariant Lefschetz decomposition of length $h$ is that the three numbers above
are divisible by $h$. 
This is equivalent to the integrality of the fractions 
\begin{equation*}
\frac {(h-1)(h-2)}{6},
\qquad
\frac {(h+1)(h-1)}{3},
\quad\text{and}\quad
\frac {(h+2)(h+1)}{6}.
\end{equation*}
It is easy to see that this condition holds if and only if $h$ is not divisible by $3$. 
Since~$h=n+1$ we obtain that this condition holds if and only if $n \not=3p+2$.
\end{proof}

In other words, we can expect the existence of the desired rectangular decomposition only if $n=3p$ or $n=3p+1$. In the Subsection \ref{3p} we prove that the desired rectangular decomposition exists in these cases.

\section{Fullness}
\label{section:fullness}

In this section we prove that the $S_k$-invariant Lefschetz collection~\eqref{collection} generates the category $\cD(X_k^n)$ when $n = 1$ and any $k$ (Subection \ref{n=1}) or $k = 3$  and $n \not= 2 \text{ mod } 3$ (Subsection \ref{3p})
and moreover provides a minimal $S_k$-invariant Lefschetz collection in it. We also discuss the case $k=3$, $n=2$ (Subsection \ref{x_3^2}) that shows that in general collection~\eqref{collection} needs a modification.

\subsection{Minimal Lefschetz decomposition for $\cD(X_k^1)$}
\label{n=1}

First, we consider the case $n=1$. 
Recall the definition~\eqref{eq:ma-hma} of $S_k$-invariant subcategories $\mA \subset \widehat\mA \subset \cD(X_k^1)$.
In the case $n = 1$ it can be rewritten as
\begin{align}\label{eq:ma-hma-n1}
\mA &= \{ a \in [0,1]^k \mid \operatorname{\mathrm{Card}} \left\{i \mid a_i = 0 \right\}> k/2 \},\\
\widehat\mA &= \{ a \in [0,1]^k \mid \operatorname{\mathrm{Card}} \left\{i \mid a_i = 0 \right\} \ge k/2 \},
\end{align}

where $\operatorname{\mathrm{Card}}$ stands for the cardinality of a set.
If $k$ is odd, $\mA=\wa$.

\begin{Th}\label{th:n=1} 
We have $S_k$-invariant Lefschetz decompositions
\begin{equation*}
\cD(X_k^1) = 
\begin{cases}
\langle \mA, \mA(1) \rangle, & \text{if $k = 2m + 1$},\\
\langle \widehat\mA, \mA(1) \rangle, & \text{if $k = 2m$}.\\
\end{cases}
\end{equation*}
Moreover, these are minimal Lefschetz collections.
\end{Th}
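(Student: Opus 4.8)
The plan is to establish two things: first, that the collections in question are full (generate $\cD(X_k^1)$), and second, that they are minimal Lefschetz collections. Minimality will follow almost immediately from the numerical results already in hand. Indeed, for $k = 2m+1$ we have $h = n+1 = 2$ which equals the index of $X_k^1$, so a full rectangular Lefschetz collection of length $2$ is automatically minimal by the Serre duality argument recalled in the introduction (see \cite[Subsection~2.1]{KuznetsovSmirnov}). For $k = 2m$ we will compare the ranks $r_0 = \operatorname{\mathrm{rk}} K_0(\widehat\mA)$ and $r_1 = \operatorname{\mathrm{rk}} K_0(\mA)$ of the two blocks against the bound $r_0 - r_1 \ge \binom{2m}{m}$ of Corollary~\ref{restriction for k=2m}; one computes directly from \eqref{eq:ma-hma-n1} that $r_0$ counts subsets of $\{1,\dots,k\}$ of size $\ge m$ (these index the positions where $a_i = 0$, equivalently the complementary positions where $a_i = 1$ form a set of size $\le m$) and $r_1$ counts subsets of size $\ge m+1$, so $r_0 - r_1 = \binom{2m}{m}$ exactly, matching the lower bound. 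Hence the decomposition is minimal (no $S_{2m}$-invariant Lefschetz collection can have a strictly smaller starting block), and this is the promised sharpness in the remark following Corollary~\ref{restriction for k=2m}.

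The substantive part is fullness. First I would reduce, using Theorem~\ref{theorem:semiorthogonality}, to showing that the triangulated subcategory $\mT = \langle \widehat\mA, \mA(1)\rangle$ (resp.\ $\langle\mA,\mA(1)\rangle$ when $k$ is odd) contains every line bundle $\cO(a)$ with $a \in [0,1]^k$, since these generate $\cD(X_k^1)$ by Theorem~\ref{generate}. Equivalently, by twisting, it suffices to know that $\mT$ contains $\cO(a)$ and $\cO(a)(1) = \cO(a + (1,\dots,1))$ for all $a \in [0,1]^k$; after the twist $\cO(a)(1)$ corresponds to a vector with entries in $\{1,2\}$, so really the task is to produce in $\mT$ all line bundles $\cO(c)$ with $c_i \in \{0,1,2\}$, modulo the ones already placed in the two blocks by definition. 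The idea is to run an induction on $\operatorname{\mathrm{Card}}\{i \mid a_i = 1\}$ for line bundles $\cO(a)$ with $a\in[0,1]^k$, using the Koszul-type (Euler sequence) resolutions on the individual $\mathbb{P}^1$ factors: on a single $\mathbb{P}^1$ one has the short exact sequence $0 \to \cO(-1) \to \cO^{\oplus 2} \to \cO(1) \to 0$, which when tensored (via the external tensor product) into one coordinate of $X_k^1$ expresses $\cO(\dots,1,\dots)$ through $\cO(\dots,0,\dots)$ and $\cO(\dots,-1,\dots)$, or dually expresses $\cO(\dots,-1,\dots)$ through $\cO(\dots,0,\dots)$ and $\cO(\dots,1,\dots)$. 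Combining such sequences across several coordinates, together with Corollary~\ref{bbb2} (which lets us pass from line bundles indexed by $[0,1]^I$ to line bundles indexed by all of $\mathbb{Z}^I$ in a fixed subset of coordinates, once enough of them are known to lie in $\mT$), should let us bootstrap: a line bundle $\cO(a)$ with many $1$'s and a fixed number of $0$'s can be pushed into $\mT$ because its "excess" $1$-coordinates can be resolved in terms of bundles with fewer $1$'s (already in $\mT$ by induction) and bundles with a $-1$ in some coordinate, the latter being handled by twisting a bundle from $\mA(1)$ or by Corollary~\ref{bbb2}.

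The main obstacle, and the place where the parity of $k$ enters, is the "middle" case: line bundles $\cO(a)$, $a\in[0,1]^k$, with exactly $\operatorname{\mathrm{Card}}\{i\mid a_i=0\} = \lceil k/2\rceil$ or exactly $\lfloor k/2\rfloor$ zeros — precisely the bundles sitting on the boundary between $\mA$, $\widehat\mA$, and their twists. When $k$ is odd this boundary is clean and $\mA = \widehat\mA$, so the resolutions never stall; when $k = 2m$ one must show that the single extra block of bundles with exactly $m$ zeros (the difference $\widehat\mA \setminus \mA(−1)$-type bundles) is both necessary — which is the rank count above — and sufficient, i.e.\ that adding exactly these $\binom{2m}{m}$ line bundles to $\mA$ and $\mA(1)$ closes up the generation. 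I would handle this by carefully choosing, for a target bundle $\cO(a)$ with $m$ ones and $m$ zeros, an Euler sequence in one coordinate where $a_i = 1$: this writes $\cO(a)$ via $\cO(a')$ (with $m{+}1$ zeros, hence in $\mA$, inside the first block) and $\cO(a'')$ (with a $-1$ in that slot), and $\cO(a'')$ is the twist by $\cO(-1,\dots,-1)$ of a bundle with entries in $\{0,1,2\}$ that — after a symmetric downward induction using the dual Euler sequences — is seen to lie in $\mA(1)$ or is reducible to bundles already obtained. Closing this loop, i.e.\ checking that the induction terminates and no bundle outside $\mT$ is ever needed, is the one genuinely delicate bookkeeping step; everything else is the standard Euler-sequence-plus-Corollary~\ref{bbb2} machinery.
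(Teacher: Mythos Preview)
Your minimality argument is exactly the paper's: rectangular implies minimal for odd $k$, and for even $k = 2m$ one computes $r_0 - r_1 = \binom{2m}{m}$, matching the bound of Corollary~\ref{restriction for k=2m}.

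For fullness, however, your Euler-sequence-plus-induction scheme does not close as you describe it. When you reduce $\cO(a)$ (with $a \in [0,1]^k$ and $j \ge m+1$ ones) via the Euler sequence in a coordinate $i$ with $a_i = 1$, you produce $\cO(a')$ with $a'_i = 0$ (fine, by induction) and $\cO(a'')$ with $a''_i = -1$. Your plan for $\cO(a'')$ is to write it as $\cO(a''+\mathbf{1})(-1)$ and place $\cO(a''+\mathbf{1})$ in $\mA(1)$ by a ``symmetric downward induction.'' But $a'' + \mathbf{1} \in [0,2]^k$ has exactly $k - j \le m$ ones, which is \emph{too few}: it is neither in $\mA(1)$ by definition nor in the range where the same argument applies. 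Running further Euler sequences on its $2$'s just reintroduces $0$'s elsewhere, and the two inductions chase each other with no termination mechanism in sight. This is the ``delicate bookkeeping'' you flag, and it is not merely bookkeeping --- the direction you chose for the Euler sequence (turning $1$'s into $0$'s and $-1$'s) is the one that does not stay inside a controllable region.

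The paper sidesteps all of this with a single application of Corollary~\ref{bbb2} to the triangulated subcategory $\mA(1)$ itself, with $a = (1,\dots,1)$ and $I$ of cardinality at most $k-m-1$. For every $b' \in [0,1]^I$ the vector $(1,\dots,1)+b'$ lies in $[1,2]^k$ with at least $m+1$ ones, so $\cO((1,\dots,1)+b') \in \mA(1)$ by definition; Corollary~\ref{bbb2} then gives $\cO((1,\dots,1)+b') \in \mA(1)$ for all $b' \in \mathbb{Z}^I$, in particular for $b'$ with entries in $\{-1,0\}$. Hence every $\cO(b)$ with $b \in [0,1]^k$ and at least $m+1$ ones already lies in $\mA(1)$. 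Since $\widehat\mA$ contains those with at most $m$ ones, all of $[0,1]^k$ is covered and Theorem~\ref{generate} finishes. No explicit Euler sequences, no $-1$'s, no induction beyond what is packaged inside Corollary~\ref{bbb2}.
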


\begin{proof}
By definition both subcategories $\mA$ and $\widehat\mA$ are generated by $S_k$-invariant exceptional collections. Moreover, by Theorem \ref{theorem:semiorthogonality} they are semiorthogonal. Thus for the first part of the theorem it is enough to show that $\widehat\mA$ and $\mA(1)$ generate $\cD(X_k^1)$.
For this we show that
\begin{equation}
\label{eq:oa-in-a1}
\cO(b) \in \mA(1)
\qquad
\text{if $b \in [0,2]^k$ and $\operatorname{\mathrm{Card}} \left\{i \mid b_i=1 \right\} \ge m+1$}.
\end{equation} 

Indeed, by definition of $\mA(1)$ we have 

\begin{equation}
\label{eq:extra}
\cO(b) \in \mA(1)
\qquad
\text{if $b \in [1,2]^k$ and $\operatorname{\mathrm{Card}} \left\{i \mid b_i=1 \right\} \ge m+1$}.
\end{equation} 

Note that $\cO(1, \dots, 1) \in \mA(1)$. 
We apply Corollary~\ref{bbb2} to $a=(1, \dots, 1)$ and any~$I$ of cardinality $m$. It proves that for any $b\in [0,2]^k$ such that $b_i=1$ for $i \notin I$ we have~$\cO(b) \in \mA(1)$. This proves \eqref{eq:oa-in-a1}.

Combining ~\eqref{eq:oa-in-a1} with the definition of $\widehat\mA$, 
we deduce that all line bundles~$\cO(a)$ with $a \in [0,1]^k$ 
are contained in the subcategory of $\cD(X_k^1)$ generated by~$\widehat\mA$ and $\mA(1)$.
By Theorem~\ref{generate} this proves the first part of Theorem \ref{th:n=1}.

It remains to show the minimality of the constructed Lefschetz collection.
For odd~$k$ the collection is rectangular of length $d=2$, hence minimal (see \cite[Subsection~2.1]{KuznetsovSmirnov}), so there is nothing to prove.
For even $k$ we note that the ranks of the Grothendieck groups of $\widehat\mA$ and $\mA$ are given by
\begin{equation*}
r_0 = 2^{2m} + \frac{1}{2}\binom {2m} {m}
\qquad\text{and}\qquad
r_1=2^{2m}-\frac{1}{2}\binom {2m} {m}
\end{equation*}
respectively.
In particular, $r_0 - r_1 = \binom{2m}{m}$, hence the collection is minimal by Corollary \ref{restriction for k=2m}.
\end{proof}

\subsection{Lefschetz decompositions for $\cD(X_3^{3p})$ and~$\cD(X_3^{3p+1})$} 
\label{3p}

In this subsection we prove the following

\begin{Th} 
\label{theorem:x3-3p}
Let $n = 3p$ or $n=3p+1$. The categories $\mA$ defined by~\eqref{eq:ma-hma} and~\eqref{eq:enk} generate an $S_3$-invariant rectangular Lefschetz collection 
\begin{equation*}
\cD(X_3^n) = \langle \mA, \mA(1), \dots, \mA(n) \rangle. 
\end{equation*}
\end{Th}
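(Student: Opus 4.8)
The plan is to prove fullness by induction, showing that the category $\mT = \langle \mA, \mA(1), \dots, \mA(n)\rangle$ contains every line bundle $\cO(a)$ with $a \in [0,n]^3$; by Theorem~\ref{generate} this suffices. By $S_3$-invariance of $\mT$ it is enough to treat $a = (a_1,a_2,a_3)$ with $a_1 \ge a_2 \ge a_3 = 0$, and, after twisting by $\cO(j,j,j)$ (which preserves $\mT$ because $\mT$ is Lefschetz with respect to $\cO(1,1,1)$), it is enough to treat a single "layer" — say to show $\cO(a) \in \mT$ for all $a$ with $0 \le a_3 \le a_2 \le a_1 \le n$, working down from the top block $\mA(n)$. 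The key engine is Corollary~\ref{bbb2}: once $\mT$ contains a full $[0,n]^I$-worth of twists of some fixed $\cO(a)$ for an index subset $I$, it contains all $\Z^I$-twists of $\cO(a)$, so partial information about one or two coordinates can be bootstrapped into complete information.

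The main steps I would carry out, in order. First, unwind the definition~\eqref{eq:enk}: for $k=3$ and $h=n+1$, the set $\Enk$ consists (up to $S_3$) of triples $(c_1,c_2,0)$ with $3c_1 < 2h$ and $3c_2 < h$, i.e. $c_1 \le \lceil 2h/3\rceil - 1$ and $c_2 \le \lceil h/3\rceil - 1$; here the cases $n = 3p$ and $n = 3p+1$ (equivalently $h \equiv 1, 2 \pmod 3$) must be separated, since the exact ranges of $c_1, c_2$ differ slightly, and this is exactly where coprimality of $h$ and $3$ is used to guarantee $\mA = \wa$. Second, record that $\mA(i)$ contains $\cO(c(i))$ for all such $c$; in particular $\cO(i,i,i) \in \mA(i)$, and more generally $\cO(a)\in\mA(i)$ whenever $a - (i,i,i)$ lies in the $\Enk$-region. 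Third, apply Corollary~\ref{bbb2} repeatedly: starting from $\cO(i,i,i) \in \mA(i)$ and varying one coordinate over $[0,n]$, one gets $\cO(a_1, i, i) \in \mT$ for all $a_1 \in \Z$; then varying a second coordinate one enlarges the known region, and so on. Fourth, the crucial combinatorial verification: show that as $i$ ranges over $0,\dots,n$ the regions one obtains this way (together with the semiorthogonality constraints from Theorem~\ref{theorem:semiorthogonality}, which tell us the decomposition is honest) exhaust all of $[0,n]^3$. This is a covering argument in the cube $[0,n]^3$, and it is where the specific bounds $3c_i < h(3-i)$ make the pieces fit together without gaps — one needs that the "staircase" cut out by $\Enk$ at height $i$, translated by $(i,i,i)$, overlaps the staircase at height $i+1$ enough for Corollary~\ref{bbb2} to propagate.

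I expect the fourth step — the explicit covering of the cube $[0,n]^3$ by the translated staircase regions — to be the main obstacle. It requires a careful case analysis on the residue of $n$ mod $3$ and on the relative sizes of $a_1, a_2, a_3$, and one must be sure to invoke Corollary~\ref{bbb2} with the right index set $I$ at each stage (sometimes $|I|=1$, sometimes $|I|=2$) so that the already-known line bundles genuinely form a full $[0,n]^I$-family of twists of a common bundle. A clean way to organize this is to fix $a$ with $a_1 \ge a_2 \ge a_3 = 0$ and $a_1 \le n$, locate the largest $i$ with $(a_1 - i, a_2 - i, a_3 - i)$ (reordered) in $\Enk$ — one checks this $i$ exists precisely because $a_1 \le n = h-1$ forces the chain of inequalities $3(a_1 - i) < 2h$ to hold for $i$ slightly positive — and then use one or two applications of Corollary~\ref{bbb2} to descend the remaining coordinates; the bookkeeping is finite but must be done with care, and the two residue cases $h \equiv 1$ and $h \equiv 2 \pmod 3$ should be handled in parallel with the differing staircase endpoints written out explicitly. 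Once every $\cO(a)$, $a \in [0,n]^3$, is shown to lie in $\mT$, Theorem~\ref{generate} gives $\mT = \cD(X_3^n)$, and since by construction the collection is rectangular of length $n+1 = h$ equal to the index, it is automatically minimal.
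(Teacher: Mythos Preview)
Your global plan---show every $\cO(a)$ with $a\in[0,n]^3$ lies in $\mT$ by iterating Corollary~\ref{bbb2} and $S_3$-invariance, then conclude by Theorem~\ref{generate}---is the paper's plan as well, and you have correctly located the real work in the covering combinatorics. But your reduction step contains a genuine error: the twist by $\cO(j,j,j)$ does \emph{not} preserve $\mT$ before fullness is proven. Being Lefschetz only means $\mT=\langle\mA,\mA(1),\dots,\mA(n)\rangle$; twisting shifts this to $\langle\mA(1),\dots,\mA(n+1)\rangle$, which is a different subcategory until you know both equal $\cD(X)$. So you cannot reduce to the slice $a_3=0$, and once you do, your ``locate the largest $i$ with $a-(i,i,i)\in\Enk$'' strategy fails immediately: for $a_3=0$ and any $i>0$ the vector $(a_1-i,a_2-i,-i)$ has a negative entry and is never in $\Enk$.

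The paper instead fixes $a_3=i$ and runs a downward induction over three ranges of~$i$. For $i\in[n-p,n]$ the set of $(a_1,a_2)$ with $\cO(a_1,a_2,i)\in\mT$ (collected from \emph{all} blocks $\mA(j)$, not just $\mA(i)$) is a twelve-gon whose lower edges support vertical segments of exactly $n+1$ lattice points; Corollary~\ref{bbb2} with $|I|=1$ turns each such segment into a full line, $S_3$-symmetry supplies the transposed lines, and then Corollary~\ref{bbb2} with $|I|=2$ finishes the plane. For $i\in[p,n-p-1]$ the polygon by itself is not enough: one must feed in the already-established top range via $S_3$-symmetry (if $a_1\in[n-p,n]$ then $\cO(a_1,a_2,i)\in\mT$ by the previous case with the roles of the coordinates permuted). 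For $i\in[0,p-1]$ one needs both previous ranges. Your Step~3 (``vary one coordinate starting from $\cO(i,i,i)$'') is essentially the first of these moves, but note that it only produces an $(n+1)$-run of bundles when $i\ge p$; for small $i$ no single coordinate line through $\cO(i,i,i)$ carries enough marked points, and the bootstrapping from larger $i$ is not optional.
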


The proof takes the rest of the section.
As in the case of Theorem~\ref{theorem:x32} we denote by~$\mT$ the triangulated subcategory of $\cD(X)$ generated by the above Lefschetz collection. Note that $\mT$ is $S_3$-invariant.
By subsequent applications of Corollary~\ref{bbb2} we will show that many other line bundles are contained in $\mT$, until in the end we have~$\cO(a) \in \mT$ 
for all $a \in [0,n]^3$ and conclude by Theorem~\ref{generate}.

We will prove the statement of Theorem \ref{theorem:x3-3p} for $n = 3p$ and $n=3p+1$ in parallel. Denote by $T$ the set of all $a \in \Z^3$ such that $\cO(a) \in \mT$. Note that $T$ is $S_3$-invariant.

\begin{Prop} 
\label{one} 
For each $i \in [n-p,n]$ and $a \in \Z^3$ with $a_3 = i$, we have~$a \in T$.
\end{Prop}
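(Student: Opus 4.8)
The plan is to prove $\cO(a) \in \mT$ for all $a$ with $a_3 = i$ by descending induction on $i$, starting from $i = n$ and going down to $i = n - p$. Since $T$ is $S_3$-invariant, it suffices to treat $a$ with $a_1 \ge a_2 \ge a_3 = i$; moreover once we know all such $a$ with a fixed top coordinate $a_3 = i$ lie in $T$, Corollary~\ref{bbb2} will let us remove constraints on the other two coordinates. So the real content is: given that the ``layer'' $a_3 = i+1$ (and all layers above) are in $T$, show the layer $a_3 = i$ is in $T$.

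First I would identify which line bundles $\cO(a)$ with $a_3 = i$ already belong to $\mT$ for free. By construction $\mT = \langle \wa, \mA(1), \dots, \mA(n) \rangle$, and $\mA(j) = \langle \cO(c(j)) \rangle_{c \in \Enk}$. Unwinding the definition~\eqref{eq:enk} of $\Enk$ for $k = 3$: an $S_3$-orbit representative $c_1 \ge c_2 \ge c_3 = 0$ lies in $\Enk$ iff $3c_1 < 2h$ and $3c_2 < h$, i.e. $c_2 \le \lfloor (h-1)/3 \rfloor = p$ (using $h = n+1 = 3p+1$ or $3p+2$) and $c_1 \le \lfloor (2h-1)/3 \rfloor$. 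Twisting by $j$, the bundles $\cO(a)$ with $a_3 = j$ that lie in $\mA(j)$ (hence in $\mT$) are exactly those with, after sorting, middle coordinate $a_2 - j \le p$ and top coordinate controlled by the other inequality. So for $i$ close to $n$, a large chunk of the layer $a_3 = i$ is already present, and the gap to be filled is a bounded region near the ``diagonal.''

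The core step is then to use Corollary~\ref{bbb2} in its $|I| = 1$ and $|I| = 2$ forms together with the Koszul/Beilinson-type resolutions hidden in Theorem~\ref{generate}, exactly as the introduction to the section advertises (``by subsequent applications of Corollary~\ref{bbb2}''). Concretely: knowing $\cO(a + b) \in \mT$ for all $b \in [0,n]^I$ forces $\cO(a + b) \in \mT$ for all $b \in \mathbb{Z}^I$; so if I can exhibit, for a target bundle $\cO(a)$ in the layer $a_3 = i$, a coordinate direction (or pair of directions) along which a full $[0,n]$-worth of translates is already known to be in $T$ — either because they sit in some $\mA(j)$ with $j > i$, or because they lie in an already-processed higher layer, or in $\wa$ — then $\cO(a)$ itself lands in $T$. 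The bookkeeping is to order the targets in the layer so that each one, when we reach it, has such a direction available; the constraint $i \ge n - p$ is precisely what guarantees enough room (the twists $\mA(i+1), \dots, \mA(n)$ collectively cover enough of the relevant sublattice, since there are $n - i \le p$ of them and the ``width'' parameter in $\Enk$ is also $p$).

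The main obstacle I anticipate is the combinatorial matching: arranging the order of processing within a layer and choosing, for each target lattice point, the correct subset $I$ and base point $a$ so that Corollary~\ref{bbb2} applies — i.e. verifying that the required translates $\{\cO(a+b)\}_{b \in [0,n]^I}$ are genuinely all already in $T$ and not just most of them. This amounts to a careful inequality chase comparing the defining inequalities of $\Enk$ (twisted by various $j \le n$) against the region $a_3 = i$, and it is where the hypotheses $n \equiv 0, 1 \pmod 3$ and $i \ge n - p$ get used in an essential way. Everything else — semiorthogonality, the reduction to $S_3$-representatives, the final appeal to Theorem~\ref{generate} — is routine given the earlier results.
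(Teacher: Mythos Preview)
Your descending induction on $i$ is a red herring. For the base case $i = n$ there are no higher layers to draw on, so you must argue directly from the shapes of the $\mA(j)$; once you have that direct argument it applies uniformly to every $i \in [n-p,n]$ and the induction adds nothing. As for the mechanism you suggest for the inductive step: bundles in $\mA(j)$ with $j > i$ have all three coordinates $\ge j$, so they contribute nothing to the plane $a_3 = i$. Already--processed higher layers do contribute to that plane via $S_3$-symmetry --- they give the stripes $a_1 \in [i+1,n]$ and $a_2 \in [i+1,n]$ --- but these have width $n - i \le p$, far too narrow to invoke Corollary~\ref{bbb2} (which needs $n+1$ consecutive values) and not obviously enough to close the gap.

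The paper treats each $i \in [n-p,n]$ independently, with no induction between layers. The key observation you are missing is purely about the shape of the initial data in a single layer: the set of $(a_1,a_2)$ with $(a_1,a_2,i) \in T$ coming straight from the definition of $\mT$ is an explicit polygon, and for every $a_1$ in a range of $2p+1$ consecutive integers this polygon already contains a run of $n+1$ consecutive $a_2$-values. One application of Corollary~\ref{bbb2} with $I = \{2\}$ therefore produces full vertical lines for those $a_1$; swapping $a_1 \leftrightarrow a_2$ (an element of $S_3$ fixing $a_3 = i$) converts them to full horizontal lines; together with the original polygon these cover an $(n+1)\times(n+1)$ square in the $(a_1,a_2)$-plane, and Corollary~\ref{bbb2} with $I = \{1,2\}$ finishes. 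The ``combinatorial matching'' you flagged as the main obstacle is thus the entire content of the proof, and it goes through this tall-enough property of the polygon rather than through any layer-to-layer induction.
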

\begin{proof}
Let us fix $i \in [n-p,n]$.
Consider a plane and mark on it all integral points~$(a_1,a_2)$ such that $(a_1,a_2, i) \in T$.
By definition~\eqref{eq:enk} all integral points of the polygon in Figure~\ref{figure:one} are marked.
The coordinates of its vertices $x_1,\dots,x_{12}$ are listed in the table below.

\begin{figure}[H]

      \caption{Illustration  for Step 1 of Proposition~\ref{one}.}\label{figure:one}
        \centering
\begin{tikzpicture} [scale=0.8]
\path [fill=black!30!white] (2,0) -- (4,0) -- (4,6) -- (2,6)--(2,0);
\draw [<->] (0,6) -- (0,0) -- (6,0);

 \draw[fill] (5,3) circle (0.7pt);
 \draw[fill] (4,4) circle (0.7pt);
 \draw[fill] (5,4) circle (0.7pt);
  \draw[fill] (3,2) circle (0.7pt);
 \draw[fill] (4,2) circle (0.7pt);
   \draw[fill] (2,1) circle (0.7pt);
 \draw[fill] (1,1) circle (0.7pt);
  \draw[fill] (3,5) circle (0.7pt);
 \draw[fill] (4,4) circle (0.7pt);
 \draw[fill] (4,5) circle (0.7pt);
  \draw[fill] (2,3) circle (0.7pt);
 \draw[fill] (2,4) circle (0.7pt);
   \draw[fill] (1,2) circle (0.7pt);

 \draw (3,2) --(4,2);
  \draw (4,2) --(5,3);
  \draw (5,4) --(4,4);
 \draw (5,4) --(5,3);
  \draw (1,1) --(2,1);
 \draw (2,1) --(3,2);
 \draw (2,3) --(2,4);
  \draw (2,4) --(3,5);
  \draw (4,5) --(4,4);
 \draw (4,5) --(3,5);
  \draw (1,1) --(1,2);
 \draw (1,2) --(2,3);

  \node [below] at (1,1) {$x_1$};
  \node [above, xshift=-0.5cm, yshift=0.2cm] at (1,2) {$x_2$};
  \node [above left] at (2,3) {$x_3$};
  \node [above left] at (2,4){$x_4$};
  \node [above left] at (3,5){$x_5$};
  \node [above] at (4,5) {$x_6$};
  \node [above right] at (4,4) {$x_7$};
  \node [right] at (5,4) {$x_8$};
  \node [right] at (5,3) {$x_9$};
  \node [right] at (4,2){$x_{10}$};
  \node [below right] at (3,2) {$x_{11}$};
  \node [below right] at (2,1){$x_{12}$};

\node [below] at (6,0) {$a_1$};
\node [left] at (0,6) {$a_2$};

 \draw[fill] (2.5,1.5) circle (0.7pt);
  \draw[fill] (2.5,4.5) circle (0.7pt);
 \draw [dashed] (2.5,1.5) --(2.5,4.5);
 \node [below right ] at (2.5,1.5) {\scriptsize$(n-p-i+c,n-2p-i+c)$};
  \draw[fill] (3.5,2) circle (0.7pt);
  \draw[fill] (3.5,5) circle (0.7pt);
  \draw [dashed] (3.5,2) --(3.5,5);
\end{tikzpicture}
\end{figure}
\begin{equation*}
\begin{array}{||c|c|c||}
\hline
  & n=3p & n=3p+1\\
\hline
x_1 & (i-2p,i-2p) & (i-2p-1,i-2p-1)\\
x_2 & (i-2p,i-p) & (i-2p-1,i-p-1)\\
x_3 & (i-p,i) & (i-p,i)\\
x_4 & (i-p,i+p) & (i-p,i+p+1)\\
x_5 & (i,i+2p) & (i,i+2p+1)\\
x_6 & (i+p,i+2p) & (i+p,i+2p+1)\\
x_7 & (i+p,i+p) & (i+p,i+p)\\
x_8 & (i+2p,i+p) & (i+2p+1,i+p)\\
x_9 & (i+2p,i) & (i+2p+1,i)\\
x_{10} & (i+p,i-p) & (i+p+1,i-p)\\
x_{11} & (i,i-p) & (i,i-p)\\
x_{12} & (i-p,i-2p) & (i-p-1,i-2p-1)\\
\hline
\end{array}\\
\end{equation*}

Our goal is to show that all integral points of the plane are in $T$. We do this in several steps.

{\bf Step 1.} 
For each $c \in[0,p]$ we apply Corollary~\ref{bbb2} with $a$ any integral point on the union of the edges~$[x_{10},x_{11}]$ and~$[x_{11},x_{12}]$ of the polygon in Figure~\ref{figure:one}, i.e., with~$a = (i+c,i-p,i)$, $I = \{2\}$ or~$a=(i+p+c,i-2p+c,i)$, $I = \{2\}$. 
Each dashed segment in Figure~\ref{figure:one} contains~$n$ integral points corresponding to line bundles contained in $\mT$.  

By Corollary~\ref{bbb2} we conclude that all points $(i+c,t,i)$, $(i+p+c,i-2p+c,i)$  are in $T$ for any~$t \in \Z$. In other words, all points in the grey vertical stripe in Figure~$1$ are in $T$.

{\bf Step 2.} Using $S_3$-symmetry of $T$ we conclude that all points in the horizontal grey stripe on Figure $2$ are in $T$.

{\bf Step 3.} Combining the results of Step 1 and Step 2 above, we see that $a \in T$ for any $a$ such that $(a_1,a_2) \in [i+p-n,i+p]^2$, $a_3=i$. In other words, all points in the square with vertices  $x_1$, $y_1$, $x_7$, $y_2$ in Figure $2$ are in $T$. Therefore we can apply Corollary~\ref{bbb2} with $a = (i+p-n,i+p-n,i)$ and $I = \{1,2\}$. We conclude that if~$a_3=i$, then~$a \in T$.

\begin{figure}[H]
  \caption{Illustration for Steps 2--3 of Proposition \ref{one}.}
  \centering
\begin{tikzpicture} [scale=0.8]

\path [fill=black!30!white] (2,0) -- (2,6) -- (4,6) -- (4,0) -- (2,0);
\path [fill=black!30!white] (0,2) -- (6,2) -- (6,4) -- (0,4) -- (0,2);
\draw [<->] (0,6) -- (0,0) -- (6,0);

   \draw (1,4) --(4,4);
   \draw (4,4) --(4,1);
   \draw (4,1) --(1,1);
   \draw (1,1) --(1,4);

 \draw[fill] (4,4) circle (0.7pt);
  \draw[fill] (1,1) circle (0.7pt);

 \draw (3,2) --(4,2);
  \draw (4,2) --(5,3);
  \draw (5,4) --(4,4);
 \draw (5,4) --(5,3);
  \draw (1,1) --(2,1);
 \draw (2,1) --(3,2);
 \draw (2,3) --(2,4);
  \draw (2,4) --(3,5);
  \draw (4,5) --(4,4);
 \draw (4,5) --(3,5);
  \draw (1,1) --(1,2);
 \draw (1,2) --(2,3);

\node [above right] at (4,4) {$x_7$};
\node [below] at (1,1) {$x_1$};
\node [right] at (4,1) {$y_2$};
\node [above] at (1,4) {$y_1$};

\node [below] at (6,0) {$a_1$};
\node [left] at (0,6) {$a_2$};

\end{tikzpicture}
\end{figure}
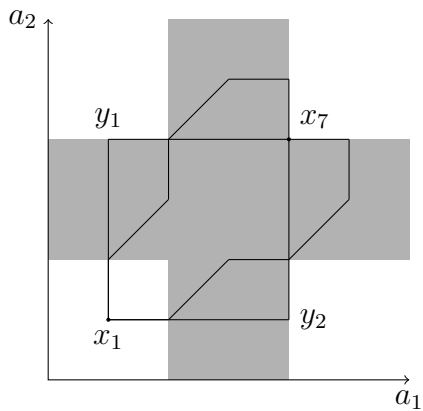

This completes the proof of Proposition \ref{one}.
\end{proof}

\begin{Prop} \label{two} For any $i \in [p,n-p-1]$, $a\in \Z^3$ such that $a_3=i$, we have~$a \in T$.
\end{Prop}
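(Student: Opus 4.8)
The plan is to mimic the inductive strategy of Proposition~\ref{one}, but now working downward from the already-established range $i \in [n-p,n]$ of the third coordinate. More precisely, I would prove Proposition~\ref{two} by descending induction on $i$: assume that $a \in T$ whenever $a_3 = i'$ for some $i' > i$ (with $i+1 \le n-p$, so the inductive hypothesis is nonempty thanks to Proposition~\ref{one}), and deduce the same for $a_3 = i$. The point is that once an entire ``slice'' $\{a_3 = i'\}$ lies in $T$ for all $i' > i$, the $S_3$-invariance of $T$ gives us full slices in the other two coordinate directions as well, and combining these with the portion of the slice $\{a_3 = i\}$ that is visibly in $T$ from the definition~\eqref{eq:enk} of $\Enk$, we should be able to fill in the rest via repeated applications of Corollary~\ref{bbb2}.

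Concretely, fix $i \in [p, n-p-1]$. As in Proposition~\ref{one}, draw the plane of points $(a_1,a_2)$ with $(a_1,a_2,i) \in T$; the inequalities $3c_j < h(3-j)$ guarantee that the integral points of a certain hexagonal (or polygonal) region are marked a priori. The key new input is that for every $i' > i$ the full plane $\{a_3 = i'\}$ is marked (inductive hypothesis), so by $S_3$-symmetry, whenever a point has one coordinate equal to some $i' > i$ it lies in $T$; in particular, vertical and horizontal lines $\{a_1 = i'\}$, $\{a_2 = i'\}$ with $i' > i$ are entirely in the slice $\{a_3 = i\}$ — wait, that is not quite right, so instead I would use: any point $(a_1,a_2,i)$ with $a_1 > i$ or $a_2 > i$ is an $S_3$-translate of a point whose first coordinate is $> i$, hence lies in $T$. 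This already gives two half-planes worth of points. Then, for the region where both $a_1 \le i$ and $a_2 \le i$, I would apply Corollary~\ref{bbb2} along segments coming from the polygon of marked points: pick $a$ on an appropriate edge and $I = \{2\}$ (or $\{1\}$) to extend a marked segment of length $\ge n$ to an entire line, exactly in the style of Steps 1--2 of Proposition~\ref{one}; and finally apply Corollary~\ref{bbb2} with $I = \{1,2\}$ to a suitable corner to fill in a full square, after which the slice $\{a_3 = i\}$ is entirely in $T$.

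The steps, in order, are: (i) set up the descending induction with base case $i = n-p$ supplied by Proposition~\ref{one}; (ii) record the a~priori marked polygon in the slice $\{a_3=i\}$ from~\eqref{eq:enk}, and record (via $S_3$-symmetry and the inductive hypothesis) that all points with some coordinate exceeding $i$ are marked; (iii) run the Corollary~\ref{bbb2} argument with $I = \{1\}$ and $I = \{2\}$ along the edges of the polygon to produce full coordinate lines, being careful that the relevant marked segments genuinely have $\ge n$ integral points; (iv) conclude with an $I = \{1,2\}$ application to cover the remaining square, and hence the whole slice.

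The main obstacle I anticipate is the same bookkeeping difficulty that makes Proposition~\ref{one} delicate: verifying that the polygon of a~priori marked points in the slice $\{a_3 = i\}$ is large enough — in particular that it contains segments of at least $n$ consecutive lattice points in the right directions — for the chosen range $i \in [p, n-p-1]$, and that the two parallel cases $n = 3p$ and $n = 3p+1$ can be handled uniformly with only a shift of vertices as in the table after Figure~\ref{figure:one}. One should also check that the square produced at the end, together with the two half-planes $\{a_1 > i\}$ and $\{a_2 > i\}$, really exhausts the slice; this is a clean inequality check but needs the hypothesis $i \le n-p-1$ (so that $i + p \le n - 1 < n$, keeping things inside a manageable box) and $i \ge p$ (so the polygon reaches far enough to the lower-left). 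Once the slice $\{a_3 = i\}$ is shown to be in $T$, the induction closes and, combined with Propositions~\ref{one} and the remaining small-$i$ range, will give $\cO(a) \in \mT$ for all $a \in [0,n]^3$, whence $\mT = \cD(X_3^n)$ by Theorem~\ref{generate}.
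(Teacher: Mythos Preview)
Your outline follows the same core mechanism as the paper --- identify the polygon of a~priori marked points in the slice $\{a_3=i\}$, use Corollary~\ref{bbb2} with $I=\{1\},\{2\}$ along the edges $[x_{10},x_{11}]$ and $[x_{11},x_{12}]$ to produce full vertical and horizontal lines, combine these (plus $S_3$-symmetry) with already-known slices to fill the box $[0,n]^2$, and finish with $I=\{1,2\}$. Two points deserve correction.

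First, the descending induction is unnecessary. The paper treats all $i\in[p,n-p-1]$ at once, using only Proposition~\ref{one} (slices at $a_3\in[n-p,n]$) in Step~3: the edge arguments yield the square $[0,i+p]^2$ in the slice, and since $i+p\ge 2p$ while $n-p\le 2p+1$, Proposition~\ref{one} plus $S_3$-symmetry covers the remaining part of $[0,n]^2$. Your stronger inductive hypothesis (slices at all $i'>i$) would of course also work, but you gain nothing from it and pay the cost of setting up the induction.

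Second, your claim that the inductive hypothesis yields ``two half-planes $\{a_1>i\}$ and $\{a_2>i\}$'' is wrong as stated. The hypothesis gives only slices $\{a_3=i'\}$ for $i'\in[i+1,n]$; by $S_3$-invariance you get points $(a_1,a_2,i)$ with $a_1\in[i+1,n]$ or $a_2\in[i+1,n]$ --- strips, not half-planes. A point like $(n+5,0,i)$ is not obtained this way. This matters because your sentence ``the square \dots\ together with the two half-planes \dots\ really exhausts the slice'' is false: the plane is infinite and is only obtained after the $I=\{1,2\}$ application of Corollary~\ref{bbb2}, for which you must first have an honest $(n{+}1)\times(n{+}1)$ box. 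You do mention that final step, so the logic can be repaired, but the exposition as written would not stand.
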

\begin{proof}
Let us fix $i \in [p,n-p-1]$.
Consider a plane and mark on it all integral points $(a_1,a_2)$ such that $\cO(a_1,a_2,i) \in \mT$.
By definition of~\eqref{eq:enk} all integral points of the polygon in Figure $3$ are marked. The coordinates of its vertices $x_1,\dots,x_{12}$ are listed in the table below.

\begin{figure}[H] \label{figure:three}
  \caption{Illustration for Step 1 of Proposition \ref{two}.}
  \centering
 \begin{tikzpicture} [scale=0.9]

\draw [<->] (1.5,6) -- (1.5,1.5) -- (6,1.5);
\path [fill=black!30!white] (2.5,1.5) -- (4,1.5) -- (4,6) -- (2.5,6)--(2.5,1.5);

 \draw[fill] (2.75,1.75) circle (0.7pt);
  \draw[fill] (2.75,4.75) circle (0.7pt);
 \draw [dashed] (2.75,1.75) --(2.75,4.75);
   \draw[fill] (3.5,2) circle (0.7pt);
  \draw[fill] (3.5,5) circle (0.7pt);
  \draw [dashed] (3.5,2) --(3.5,5);

 \draw[fill] (5,3) circle (0.7pt);
 \draw[fill] (4,4) circle (0.7pt);
 \draw[fill] (5,4) circle (0.7pt);
  \draw[fill] (3,2) circle (0.7pt);
 \draw[fill] (4,2) circle (0.7pt);
   \draw[fill] (2.5,1.5) circle (0.7pt);
 \draw[fill] (1.5,1.5) circle (0.7pt);

  \draw[fill] (3,5) circle (0.7pt);
 \draw[fill] (4,4) circle (0.7pt);
 \draw[fill] (4,5) circle (0.7pt);
  \draw[fill] (2,3) circle (0.7pt);
 \draw[fill] (2,4) circle (0.7pt);
   \draw[fill] (1.5,2.5) circle (0.7pt);

 \draw (3,2) --(4,2);
  \draw (4,2) --(5,3);
  \draw (5,4) --(4,4);
 \draw (5,4) --(5,3);
  \draw (1.5,1.5) --(2.5,1.5);
 \draw (2.5,1.5) --(3,2);
 \draw (2,3) --(2,4);
  \draw (2,4) --(3,5);
  \draw (4,5) --(4,4);
 \draw (4,5) --(3,5);
  \draw (1.5,1.5) --(1.5,2.5);
 \draw (1.5,2.5) --(2,3);

\node [below] at (1.5,1.5) {$x_1$};
\node [above, xshift=-0.5cm] at (1.5,2.5){$x_2$};
\node [below right] at (2,3) {$x_3$};
\node [below right] at (2,4) {$x_4$};
\node [above left] at (3,5){$x_5$};
 \node [above right] at (4,5) {$x_6$};
\node [above right] at (4,4) {$x_7$};
 \node [right] at (5,4) {$x_8$};
\node [right] at (5,3) {$x_9$};
\node [right] at (4,2) {$x_{10}$};
\node [below right] at (3,2) {$x_{11}$};
\node [below] at (2.5,1.5) {$x_{12}$};

\node [below] at (6,1.5) {$a_1$};
\node [left] at (1.5,6) {$a_2$};
\end{tikzpicture}
\end{figure}

\begin{equation*}
\begin{array}{||c|c|c||}
\hline
  & n=3p & n=3p+1\\
\hline
x_1 & (0,0) & (0,0)\\
x_2 & (0,p) & (0,p)\\
x_3 & (i-p,i) & (i-p,i)\\
x_4 & (i-p,i+p) & (i-p,i+p+1)\\
x_5 & (i,i+2p) & (i,i+p+1)\\
x_6 & (i+p,i+2p) & (i+p,i+2p+1)\\
x_7 & (i+p,i+p) & (i+p,i+p)\\
x_8 & (i+2p,i+p) & (i+2p+1,i+p)\\
x_9 & (i+2p,i) & (i+2p+1,i)\\
x_{10} & (i+p,i-p) & (i+p+1,i-p)\\
x_{11} & (i,i-p) & (i,i-p)\\
x_{12} & (p,0) & (p,0)\\
\hline
\end{array}\\
\end{equation*}

Our goal is to show that all integral points of the plane are in $T$. We do this in several steps.

{\bf Step 1.} For any $c \in[0,i-p]$ we apply Corollary~\ref{bbb2} with $a$ any integral point on the union of the edges $[x_{10},x_{11}]$ and $[x_{11},x_{12}]$ of the polygon in Figure $3$, i.e., with~$a = (i+c,i-p,i)$, $I = \{2\}$ or~$a = (p+c,c,n-i)$, $I = \{2\}$. Each dashed segment in Figure $3$ contains $n$ integral points corresponding to line bundles contained in $\mT$. 

By Corollary~\ref{bbb2} we conclude that all points $(t,p+c,i)$, $(p+c,t,i)$ are in $T$ for any $t\in \Z$.  In other words, all points in the grey vertical stripe in Figure $3$ are in $T$.

{\bf Step 2.} Using $S_3$-symmetry of $T$ we conclude that  all points in the horizontal grey stripe on Figure $4$ are in $T$.

Combining the results of Step 1 and Step 2 above, we see that $a \in T$ for any $a$ such that $(a_1,a_2) \in [0,i+p]^2, a_3=i$. Since $i \in [p,n-p-1]$,  we have $i+p \ge 2p$. 

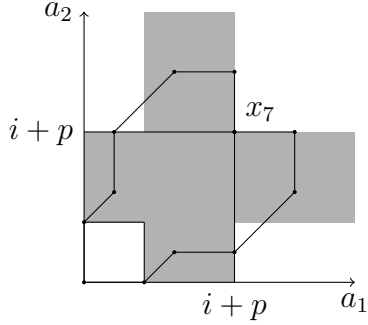
\begin{figure}[H]
  \caption{Illustration for Step 2 of Proposition \ref{two}.}
  \centering
 \begin{tikzpicture} [scale=0.8]

\path [fill=black!30!white] (1.5,2.5) -- (6,2.5) -- (6,4) -- (1.5,4) -- (1.5,2.5);

\path [fill=black!30!white] (2.5,1.5) -- (2.5,6) -- (4,6) -- (4,1.5) -- (2.5,1.5);

\draw [<->] (1.5,6) -- (1.5,1.5) -- (6,1.5);

 \draw[fill] (5,3) circle (0.7pt);
 \draw[fill] (4,4) circle (0.7pt);
 \draw[fill] (5,4) circle (0.7pt);
  \draw[fill] (3,2) circle (0.7pt);
 \draw[fill] (4,2) circle (0.7pt);
   \draw[fill] (2.5,1.5) circle (0.7pt);
 \draw[fill] (1.5,1.5) circle (0.7pt);

  \draw[fill] (3,5) circle (0.7pt);
 \draw[fill] (4,4) circle (0.7pt);
 \draw[fill] (4,5) circle (0.7pt);
  \draw[fill] (2,3) circle (0.7pt);
 \draw[fill] (2,4) circle (0.7pt);
   \draw[fill] (1.5,2.5) circle (0.7pt);

 \draw (3,2) --(4,2);
  \draw (4,2) --(5,3);
  \draw (5,4) --(4,4);
 \draw (5,4) --(5,3);
  \draw (1.5,1.5) --(2.5,1.5);
 \draw (2.5,1.5) --(3,2);
 \draw (2,3) --(2,4);
  \draw (2,4) --(3,5);
  \draw (4,5) --(4,4);
 \draw (4,5) --(3,5);
  \draw (1.5,1.5) --(1.5,2.5);
 \draw (1.5,2.5) --(2,3);

   \draw (1.5,4) --(4,4);
   \draw (4,4) --(4,1.5);
   \draw (1.5,2.5) --(2.5,2.5);
   \draw (2.5,1.5) --(2.5,2.5);

\node [above right] at (4,4) {$x_7$};
\node [below] at (4,1.5) {$i+p$};
\node [left] at (1.5,4) {$i+p$};
\node [below] at (6,1.5) {$a_1$};
\node [left] at (1.5,6) {$a_2$};
\end{tikzpicture}
\end{figure}

{\bf Step 3.} Note that by Proposition \ref{one} and $S_3$-symmetry of $T$ we have $a \in T$ if~$a_1 \in [n-p,n]$ or $a_2 \in [n-p,n]$. Using Step $2$ and the inequality $n-p \le 2p+1$ we get that $a \in T$ for any $a$ such that $(a_1,a_2) \in [0,n]^2, a_3=i$. Therefore we can apply Corollary~\ref{bbb2} with $a = (0,0,i)$ and $I = \{1,2\}$. We conclude that if $a_3=i$, then~$a \in T$.

\begin{figure}[H]
  \caption{Illustration for Step 3 of Proposition \ref{two}.}
  \centering
 \begin{tikzpicture} [scale=0.8]
\draw [<->] (1.5,6) -- (1.5,1.5) -- (6,1.5);

\path [fill=black!15!white] (1.5,1.5) -- (1.5,3.5) -- (3.5,3.5) -- (3.5,1.5) -- (1.5,1.5);
\path [fill=black!30!white] (1.5,3.5) -- (1.5,4.5) -- (4.5,4.5) -- (4.5,1.5) -- (3.5,1.5)-- (3.5,3.5) -- (1.5,3.5);

 \draw (1.5,3.5) --(3.5,3.5);
   \draw (3.5,3.5) --(3.5,1.5);
   \draw (1.5,4.5) --(4.5,4.5);
   \draw (4.5,1.5) --(4.5,4.5);

 \draw[fill] (5,3) circle (0.7pt);
 \draw[fill] (4,4) circle (0.7pt);
 \draw[fill] (5,4) circle (0.7pt);
  \draw[fill] (3,2) circle (0.7pt);
 \draw[fill] (4,2) circle (0.7pt);
   \draw[fill] (2.5,1.5) circle (0.7pt);
 \draw[fill] (1.5,1.5) circle (0.7pt);

  \draw[fill] (3,5) circle (0.7pt);
 \draw[fill] (4,4) circle (0.7pt);
 \draw[fill] (4,5) circle (0.7pt);
  \draw[fill] (2,3) circle (0.7pt);
 \draw[fill] (2,4) circle (0.7pt);
   \draw[fill] (1.5,2.5) circle (0.7pt);

   \draw[fill] (1.5,3.5) circle (0.7pt);
      \draw[fill] (1.5,4.5) circle (0.7pt);
         \draw[fill] (3.5,1.5) circle (0.7pt);
            \draw[fill] (4.5,1.5) circle (0.7pt);
            \draw[fill] (4.5,4.5) circle (0.7pt);

 \draw (3,2) --(4,2);
  \draw (4,2) --(5,3);
  \draw (5,4) --(4,4);
 \draw (5,4) --(5,3);
  \draw (1.5,1.5) --(2.5,1.5);
 \draw (2.5,1.5) --(3,2);
 \draw (2,3) --(2,4);
  \draw (2,4) --(3,5);
  \draw (4,5) --(4,4);
 \draw (4,5) --(3,5);
  \draw (1.5,1.5) --(1.5,2.5);
 \draw (1.5,2.5) --(2,3);

   \draw (1.5,4) --(4,4);
   \draw (4,4) --(4,1.5);

 \node [left] at (1.5,3.5) {$2p$};
 \node [below] at (3.5,1.5) {$2p$};
  \node [left] at (1.5,4.5) {$n$};
   \node [below] at (4.5,1.5) {$n$};

\node [below] at (6,1.5) {$a_1$};
\node [left] at (1.5,6) {$a_2$};

\end{tikzpicture}
\end{figure}
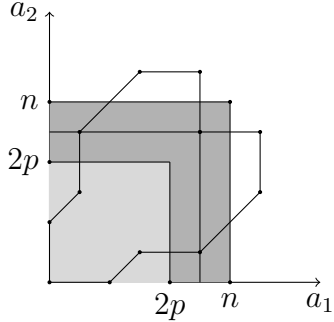

This completes the proof of Proposition \ref{two}.
\end{proof}

\begin{Prop} \label{three} For any $i \in [0,p-1]$ and $a \in \Z^3$ with $a_3 = i$, we have~$a \in T$.
\end{Prop}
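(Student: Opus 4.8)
The plan is to show, exactly as in Propositions~\ref{one} and~\ref{two}, that $(a_1,a_2,i)\in T$ for all $(a_1,a_2)\in[0,n]^2$, and then to drop the restriction $(a_1,a_2)\in[0,n]^2$ by a single application of Corollary~\ref{bbb2} with $I=\{1,2\}$ and $a=(0,0,i)$. The convenient feature of the range $i\in[0,p-1]$ is that no auxiliary ``stripe'' argument of the kind used in Propositions~\ref{one}--\ref{two} is needed: everything follows at once from those two propositions together with the fact that the starting block $\mathbb{E}_3^n$ already contains every triple all of whose coordinates are at most $p-1$. Throughout one may assume $p\ge 1$, since the statement is vacuous for $p=0$.

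So fix $i\in[0,p-1]$ and $(a_1,a_2)\in[0,n]^2$, and split into two cases. If $a_1\in[p,n]$ or $a_2\in[p,n]$, I claim $(a_1,a_2,i)\in T$: for $p\ge 1$ the ranges $[p,n-p-1]$ and $[n-p,n]$ appearing in Propositions~\ref{two} and~\ref{one} are adjacent and together exhaust $[p,n]$, so every element of $\Z^3$ whose \emph{third} coordinate lies in $[p,n]$ belongs to $T$; since $T$ is $S_3$-invariant, the same holds for any element of $\Z^3$ having \emph{some} coordinate in $[p,n]$. In the remaining case $a_1,a_2\in[0,p-1]$, all three coordinates of $(a_1,a_2,i)$ lie in $[0,p-1]$, and I would put $j:=\min(a_1,a_2,i)\in[0,p-1]\subseteq[0,n]$ and note that the non-increasing rearrangement of $(a_1-j,\,a_2-j,\,i-j)$ has the form $(c_1,c_2,0)$ with $0\le c_2\le c_1\le p-1$. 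Since $h=n+1\ge 3p+1$, the inequalities $3c_1\le 3(p-1)<2h$ and $3c_2\le 3(p-1)<h$ both hold, so $(c_1,c_2,0)$ satisfies the defining inequalities~\eqref{eq:enk} of $\mathbb{E}_3^n$; as $\mathbb{E}_3^n$ is $S_3$-invariant, it follows from~\eqref{eq:ma-hma} that $\cO(a_1-j,\,a_2-j,\,i-j)\in\mA$, and hence $\cO(a_1,a_2,i)\in\mA(j)\subset\mT$, that is $(a_1,a_2,i)\in T$.

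Combining the two cases yields $\cO\big((0,0,i)+b\big)\in\mT$ for every $b\in[0,n]^{\{1,2\}}$, and Corollary~\ref{bbb2} then upgrades this to $\cO\big((0,0,i)+b\big)\in\mT$ for all $b\in\Z^{\{1,2\}}$, which is precisely the assertion of Proposition~\ref{three}. I do not anticipate any genuine obstacle here: the only points needing attention are the two elementary numerical facts used above---that $[0,p-1]^2$ sits inside the image of $\mathbb{E}_3^n$ under projection to the first two coordinates, and that the ranges of Propositions~\ref{one} and~\ref{two} cover $[p,n]$---both immediate from $h\ge 3p+1$ and $p\ge 1$. Together with Propositions~\ref{one} and~\ref{two} this also finishes the proof of Theorem~\ref{theorem:x3-3p}, since then $\cO(a)\in\mT$ for every $a\in[0,n]^3$ and therefore $\mT=\cD(X_3^n)$ by Theorem~\ref{generate}.
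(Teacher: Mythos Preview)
Your proof is correct and follows essentially the same approach as the paper: you use Propositions~\ref{one} and~\ref{two} together with $S_3$-invariance to cover the region where $a_1\in[p,n]$ or $a_2\in[p,n]$, handle the remaining cube $[0,p-1]^3$ by checking directly that a shift lands in $\mathbb{E}_3^n$, and then finish with Corollary~\ref{bbb2}. The only cosmetic difference is that the paper phrases the second step via the polygon picture (noting that the square $[0,i+p]^2$ lies inside it), whereas you verify the defining inequalities of~\eqref{eq:enk} explicitly; the content is the same.
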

\begin{proof}
Let us fix $i \in [0,p-1]$.
Consider a plane and mark on it all integral points~$(a_1,a_2)$ such that $\cO(a_1,a_2, i) \in \mT$.
By definition of~\eqref{eq:enk} all integral points of the polygon in Figure~\ref{figure:six} are marked. The coordinates of its vertices $x_1,\dots,x_{8}$ are listed in the table below.

\begin{figure}[H] 
  \caption{Illustration for Proposition \ref{three}.} \label{figure:six}
  \centering
 \begin{tikzpicture} [scale=0.8]
\path [fill=black!30!white] (1.5,1.5) -- (1.5,0) -- (0,0) -- (0,1.5) -- (1.5,1.5);
\draw [<->] (0,4) -- (0,0) -- (4,0);

 \draw[fill] (0,2) circle (0.7pt);
 \draw[fill] (2,0) circle (0.7pt);
 \draw[fill] (0,0) circle (0.7pt);
  \draw[fill] (2.5,0.5) circle (0.7pt);
 \draw[fill] (0.5,2.5) circle (0.7pt);
   \draw[fill] (2.5,1.5) circle (0.7pt);
 \draw[fill] (1.5,2.5) circle (0.7pt);
  \draw[fill] (1.5,1.5) circle (0.7pt);

 \draw (0,0) --(0,2);
  \draw (0,2) --(0.5,2.5);
  \draw (0.5,2.5) --(1.5,2.5);
 \draw (1.5,2.5) --(1.5,1.5);
  \draw (1.5,1.5) --(2.5,1.5);
 \draw (2.5,1.5) --(2.5,0.5);
 \draw (2.5,0.5) --(2,0);
  \draw (2,0) --(0,0);
  
   \draw (1.5,0) --(1.5,1.5);
 \draw (1.5,1.5) --(0,1.5);
\node [left] at (0,1.5) {$i+p$};

 \node [below left] at (0,0) {$x_1$};
\node [left] at (0,2) {$x_2$};
\node [above ] at (0.5,2.5) {$x_3$};
\node [above right] at (1.5,2.5) {$x_4$};
\node [above right] at (1.5,1.5) {$x_5$};
\node [right] at (2.5,1.5) {$x_6$};
\node [below right] at (2.5,0.5) {$x_7$};
\node [below] at (2,0) {$x_8$};

\node [below] at (4,0) {$a_1$};
\node [left] at (0,4) {$a_2$};
\end{tikzpicture}
\end{figure}

\begin{equation*}
\begin{array}{||c|c|c||}
\hline
  & n=3p & n=3p+1\\
\hline
x_1 & (0,0) & (0,0)\\
x_2 & (0,2p) & (0,2p+1)\\
x_3 & (i,i+2p) & (i,i+2p+1)\\
x_4 & (i+p,i+2p) & (i+p,i+2p+1)\\
x_5 & (i+p,i+p) & (i+p,i+p)\\
x_6 & (i+2p,i+p) & (i+2p+1,i+p)\\
x_7 & (i+2p,i) & (i+2p+1,i)\\
x_8 & (2p,0) & (2p+1,0)\\

\hline
\end{array}\\
\end{equation*}

Our goal is to show that all integral points of the plane are in $T$. 

 We see that $a \in T$ for any $a$ such that $(a_1,a_2) \in [0,i+p]^2, a_3=i$. Since~$i$ is in~$[0,p-1]$,  we have $i+p \ge p$. 

Note that by Propositions \ref{one} and \ref{two} and $S_3$-symmetry of $T$ we have $a \in T$ if~$a_1 \in [p,n]$ or $a_2 \in [p,n]$. Thus we get that $a \in T$ for any $a$ such that $(a_1,a_2)$ is in $[0,n]^2, a_3=i$. In other words, all points in the grey square in Figure $7$ are in $T$. Therefore we can apply Corollary~\ref{bbb2} with $a = (0,0,i)$ and $I = \{1,2\}$. We conclude that if $a_3=i$, then~$a \in T$.

\begin{figure}[H]
  \caption{Illustration for Proposition \ref{three}.}
  \centering
 \begin{tikzpicture} [scale=0.8]
\draw [<->] (0,4) -- (0,0) -- (4,0);

\path [fill=black!30!white] (0,0) -- (3,0) -- (3,3) -- (0,3) -- (0,0);
\path [fill=black!30!white] (0,0) -- (1,0) -- (1,1) -- (0,1) -- (0,0);

 \draw[fill] (0,2) circle (0.7pt);
 \draw[fill] (2,0) circle (0.7pt);
 \draw[fill] (0,0) circle (0.7pt);
  \draw[fill] (2.5,0.5) circle (0.7pt);
 \draw[fill] (0.5,2.5) circle (0.7pt);
   \draw[fill] (2.5,1.5) circle (0.7pt);
 \draw[fill] (1.5,2.5) circle (0.7pt);
  \draw[fill] (1.5,1.5) circle (0.7pt);
        \draw[fill] (3,0) circle (0.7pt);
  \draw[fill] (0,3) circle (0.7pt);
    \draw[fill] (0,1) circle (0.7pt);
      \draw[fill] (1,0) circle (0.7pt);

 \draw (0,0) --(0,2);
  \draw (0,2) --(0.5,2.5);
  \draw (0.5,2.5) --(1.5,2.5);
 \draw (1.5,2.5) --(1.5,1.5);
  \draw (1.5,1.5) --(2.5,1.5);
 \draw (2.5,1.5) --(2.5,0.5);
 \draw (2.5,0.5) --(2,0);
  \draw (2,0) --(0,0);

  \draw (0,1) --(1,1);
   \draw (1,1) --(1,0);
   \draw (0,3) --(3,3);
   \draw (3,0) --(3,3);

\node [left] at (0,1) {$p$};
\node [left] at (0,3) {$n$};
\node [below] at (1,0) {$p$};
\node [below] at (3,0) {$n$};

\node [below] at (4,0) {$a_1$};
\node [left] at (0,4) {$a_2$};
\end{tikzpicture}
\end{figure}

This completes the proof of Proposition \ref{three}.
\end{proof}

\begin{proof}[Proof of Theorem \ref{theorem:x3-3p}]
We combine Propositions \ref{one}--\ref{three} to conclude that if~$a_3$ belongs to $[0,n]$, then~$a \in T$. Therefore we can apply Corollary~\ref{bbb2} with $a = (0,0,0)$ and~$I = \{1,2,3\}$. This concludes the proof of Theorem  \ref{theorem:x3-3p}.
\end{proof}

\subsection{Minimal Lefschetz decomposition for $\cD(X_3^2)$} 
\label{x_3^2}

Consider the case $n=2$, $k=3$. We have $h=n+1=3$, $\dim \K= 3$.
By Proposition~\ref{k=3}, there is no rectangular $S_3$-invariant Lefschetz decomposition of $\cD(X_3^2)$.
In this section we construct a minimal (non-rectangular) $S_3$-invariant Lefschetz decomposition of $\cD(X_3^2)$.
In particular, we prove its fullness.
The same method was used for proving fullness for any $n \not= 2 \text{ mod } 3$.

As we proved in Proposition~\ref{proposition:x32-r0-lower-bound}, an $S_3$-invariant exceptional collection in $\cD(X_3^2)$ 
cannot have less than 13 exceptional objects in the starting block. 

We consider the category $\mB$, generated by $S_3$-orbits of the following line bundles:~$\cO(0,0,0)$, $\cO(1,0,0)$, $\cO(1,1,0)$.
We consider the category $\wb$, generated $\mB$ and~$S_3$-orbit of the line bundle $\cO(2,1,0)$.

Take the collection with the following components:
\begin{equation*}
\begin{array}{||c|c|c||}
\hline
\widehat\mB & \mB(1) & \mB(2) \rule{0em}{2.8ex}\\
\hline
 \cO(0,0,0)&\cO(1,1,1)&\cO(2,2,2)\\
 \cO(1,0,0)&\cO(2,1,1)&\cO(3,2,2)\\
 \cO(1,1,0)&\cO(2,2,1)&\cO(3,3,2)\\
 \cO(2,1,0) & & \\
\hline
\end{array}\\
\end{equation*}

Note that $\wb \subset \wa$ (they differ by $S_3$-orbit of $\cO(2,0,0)$) and $\mA \subset \mB$ (they differ by $S_3$-orbit of $\cO(1,1,0)$). In other words, we remove one orbit from $\wa$ and add one orbit to $\mA$. 

The starting component $\widehat\mB$ is generated by $1 + 3 + 3 + 6 = 13$ line bundles, 
while the other two components are generated by $1 + 3 + 3 = 7$ line bundles. Evidently,~$\mB \subset \wb$.

\begin{Th} 
\label{theorem:x32}
The categories $\wb$, $\mB(1)$ and $\mB(2)$ described above generate a minimal~$S_3$-invariant Lefschetz collection in $\cD(X_3^2)$.
In particular,
 
\begin{equation}\label{decomp23}
\cD(X_3^2) = \langle \wb,\mB(1),\mB(2) \rangle.
\end{equation}
\end{Th}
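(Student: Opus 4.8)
The plan is to prove fullness of the collection \eqref{decomp23} by the same strategy as in Theorem~\ref{theorem:x3-3p}: set $T = \{a \in \Z^3 \mid \cO(a) \in \mT\}$, where $\mT = \langle \wb, \mB(1), \mB(2)\rangle$, note that $T$ is $S_3$-invariant, and show by repeated application of Corollary~\ref{bbb2} that $T \supset [0,2]^3$; then Theorem~\ref{generate} finishes the job. The semiorthogonality (hence exceptionality) of the collection should follow from Theorem~\ref{theorem:semiorthogonality} together with a direct check of the few extra $\Hom$-vanishings involving the added orbit $\cO(2,1,0)$ and the relations $\wb \subset \wa$, $\mA \subset \mB$: one verifies $\Hom(\mB(i), \wb(j)) = 0$ for $2 \ge i > j \ge 0$ using \eqref{eq:semiorthogonality} on the explicit list of thirteen-plus-seven-plus-seven line bundles. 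The Lefschetz property is immediate from $\mB \subset \wb$. Minimality then follows from Proposition~\ref{proposition:x32-r0-lower-bound}, since we will have exhibited a collection with $(r_0,r_1,r_2) = (13,7,7)$, matching the lower bound $r_0 \ge 13$ exactly (and $r_2 = 7$ matches $r_2 \le 7$).

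**Key steps for fullness.**
First I would record the starting data: from $\wb$ we get $\cO(a) \in T$ for the $S_3$-orbits of $(0,0,0)$, $(1,0,0)$, $(1,1,0)$, $(2,1,0)$; from $\mB(1)$ we get the orbits of $(1,1,1)$, $(2,1,1)$, $(2,2,1)$; from $\mB(2)$ the orbits of $(2,2,2)$, $(3,2,2)$, $(3,3,2)$. Note this is exactly the set $\Enkr$ for $n=2,k=3$ with the orbit of $(1,1,0)$ added back to $\mA(1)$ and the orbit of $(2,0,0)$ removed from $\wa$. The target is all of $[0,2]^3$. I would organize the argument layer by layer in the third coordinate: for each fixed $i \in \{0,1,2\}$, show that $(a_1,a_2,i) \in T$ for all $(a_1,a_2)$ in a suitable range, using Corollary~\ref{bbb2} with $I = \{1\}$ or $I = \{2\}$ to "extend" along rows/columns wherever we already have a full segment of $n=2$ consecutive lattice points, then with $I = \{1,2\}$ once a square is filled. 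The $S_3$-symmetry of $T$ lets us transport horizontal progress to vertical progress and to permute the role of the fixed coordinate, so one only needs to treat $i=0$ and $i=1$ (the layer $i=2$ will follow from $S_3$-symmetry once enough is known, or can be treated directly from the $\mB(2)$ data). Finally, apply Corollary~\ref{bbb2} with $a=(0,0,0)$, $I=\{1,2,3\}$.

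**The main obstacle.**
The delicate point is the bookkeeping in the two or three borderline layers, because here $h=3$ is small and the "rectangular" pattern genuinely fails, so there is very little slack: we have removed the orbit of $(2,0,0)$ from the starting block, and we must check that this loss is compensated by the added orbit of $(1,1,0)$ in $\mB(1)$ together with what Corollary~\ref{bbb2} can reach. Concretely, I expect the hard case to be the layer $a_3 = 0$: there the directly-available points are the orbit of $(0,0,0)$, $(1,0,0)$, $(1,1,0)$, $(2,1,0)$ in the plane, i.e.\ $(0,0),(1,0),(0,1),(1,1),(2,1),(1,2)$, and one must bootstrap from these — e.g.\ $(0,0),(1,0)$ is a length-$2$ horizontal segment so Corollary~\ref{bbb2} with $I=\{1\}$ gives the whole line $a_2=0$; similarly other short segments propagate — checking carefully that the reachable set eventually covers $[0,2]^2$ and not just a proper subset. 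I would draw the analogue of Figures~3--7 for each of $i=0,1,2$ to make the propagation transparent. A secondary check to not overlook: that the three extra semiorthogonality conditions created by the modification (between the orbit of $\cO(2,1,0)$ in $\wb$ and the orbits in $\mB(1), \mB(2)$, and between $\mB(1)$ and the orbit of $\cO(1,1,0)$ sitting inside it) really do hold, which is a finite and routine but necessary application of \eqref{eq:semiorthogonality}.
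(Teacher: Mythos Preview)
Your overall strategy matches the paper's: semiorthogonality via Theorem~\ref{theorem:semiorthogonality} plus a few direct checks, fullness via repeated applications of Corollary~\ref{bbb2}, and minimality via Proposition~\ref{proposition:x32-r0-lower-bound}. However, there is a concrete error in how you invoke Corollary~\ref{bbb2}, and it lands exactly on the crux of the argument.

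Corollary~\ref{bbb2} requires $\cO(a+b)\in\mT$ for all $b\in[0,n]^I$; with $n=2$ and $|I|=1$ this means \emph{three} consecutive lattice points, not two. Your example ``$(0,0),(1,0)$ is a length-$2$ horizontal segment so Corollary~\ref{bbb2} with $I=\{1\}$ gives the whole line $a_2=0$'' is therefore false: one would need $(0,0),(1,0),(2,0)$, and $(2,0,0)$ is precisely the point that was removed when passing from $\wa$ to $\wb$. Recovering $\cO(2,0,0)$ is the heart of the fullness proof, and it cannot be done within the layer $a_3=0$ using only $I\subset\{1,2\}$. The paper instead works with $I=\{3\}$ throughout, locating triples such as $\cO(2,2,1),\cO(2,2,2),\cO(2,2,3)$ drawn from $\mB(1)$ and $\mB(2)$, and bootstrapping across four short steps; only in the last step is $\cO(2,0,0)$ recovered, from the triple $\cO(2,0,1),\cO(2,0,2),\cO(2,0,3)$, where the latter two have themselves been produced by the earlier steps together with $S_3$-invariance. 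Your layer-by-layer organization can be salvaged, but only after correcting the ``three points, not two'' count and allowing the layers to feed each other more than you suggest.

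A minor point on the semiorthogonality side: the extra vanishings to check concern the orbit $\mC$ of $\cO(1,1,0)$ (the part of $\mB$ not in $\mA$), not the orbit of $\cO(2,1,0)$, which already sits in $\wb\subset\wa$ and is covered by Theorem~\ref{theorem:semiorthogonality}. The checks needed are $\Ext^\bullet(\mC(1),\wb)=\Ext^\bullet(\mC(2),\wb)=\Ext^\bullet(\mC(2),\mC(1))=0$, each a routine application of~\eqref{eq:semiorthogonality}.
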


\begin{proof} Obviously, the categories $\wb$, $\mB(1)$ and $\mB(2)$ are $S_3$-invariant. 

Let us prove that $(\wb,\mB(1),\mB(2))$ is semiorthogonal. Since $\wb \subset \wa$ and $\mB =  \langle \mA, \mC \rangle$, where $\mA$ and $\wa$ are the components of \eqref{collection} and $\mC$ is the category generated by the~$S_3$-orbit of $\cO(1,1,0)$, it is enough to check that

\begin{align*}
\Ext^\bullet(\mC(1),\wb)=0,\\
\Ext^\bullet(\mC(2),\wb)=0,\\
\Ext^\bullet(\mC(2),\mC(1))=0.
\end{align*}

These equalities can be easily checked by inspection using \eqref{eq:semiorthogonality}.

We conclude that $(\wb,\mB(1),\mB(2))$ is~$S_3$-invariant and semiorthogonal.
Let us show that it generates $\cD(X_3^2)$.

For this we denote by $\mT$ the triangulated subcategory of~$\cD(X_3^2)$ generated by~the categories $\wb, \mB(1), \mB(2)$.
Applying Corollary~\ref{bbb2} several times we will show that more line bundles are contained in $\mT$.
We note $\mT$ is $S_3$-invariant, so as soon as a line bundle is proved to be contained in $\mT$, its entire $S_3$-orbit is also contained in $\mT$.

{\bf Step 1.} We note that $\cO(2,2,1)$, $\cO(2,2,2)$, and $\cO(2,2,3)$ are all in $\mT$ 
(the first is in $\mB(1)$, while the other two are in $\mB(2)$).
Applying Corollary~\ref{bbb2} with $a = (2,2,1)$ and~$I = \{3\}$ we conclude that all line bundles $\cO(2,2,t)$ are in $\mT$.
In particular, 
\begin{equation*}
\cO(2,2,0) \in \mT.
\end{equation*}

{\bf Step 2.} We note that $\cO(1,2,0)$, $\cO(1,2,1)$, and $\cO(1,2,2)$ are in $\mT$
(the first is in $\wb$, while the other two are in $\mB(1)$).
Applying Corollary~\ref{bbb2} with $a = (1,2,0)$ and~$I = \{3\}$ we conclude that all line bundles $\cO(1,2,t)$ are in $\mT$.
In particular, 
\begin{equation*}
\cO(1,2,3) \in \mT.
\end{equation*}

{\bf Step 3.} We note that $\cO(3,2,1)$, $\cO(3,2,2)$, and $\cO(3,2,3)$ are in $\mT$ (for the first of them we use the result of Step 2).
Applying Corollary~\ref{bbb2} with $a = (3,2,1)$ and~$I = \{3\}$ we conclude that all line bundles $\cO(3,2,t)$ are in $\mT$.
In particular, 
\begin{equation*}
\cO(3,2,0) \in \mT.
\end{equation*}

{\bf Step 4.} We note that $\cO(2,0,1)$, $\cO(2,0,2)$, and $\cO(2,0,3)$ are in $\mT$ 
(for the last two of them we use the results of Step 1 and Step 3 and $S_3$-invariance of $\mT$).
Applying Corollary~\ref{bbb2} with~$a = (2,0,1)$ and $I = \{3\}$ we conclude that all line bundles~$\cO(2,0,t)$ are in $\mT$.
In particular, 
\begin{equation*}
\cO(2,0,0) \in \mT.
\end{equation*}

Combining the original collection with the results of Steps 1--4 above and $S_3$-invariance, we see that all line bundles $\cO(a)$ with $a \in [0,2]^3$ are contained in $\mT$.
Therefore, by Theorem~\ref{generate} we have $\mT = \cD(X_3^2)$.

Finally, the minimality of the constructed Lefschetz collection follows from Proposition~\ref{proposition:x32-r0-lower-bound}.
\end{proof}

\begin{Def}[{\cite[Definition~1.3]{KuznetsovSmirnov}}]
The \textit{rectangular part} of Lefschetz decomposition~$\langle \mB_0, \mB_1(1), \dots, \mB_d(d) \rangle = \cD(X)$ is $(\mB_d, \mB_d(1), \dots, \mB_d(d))$. 
The subcategory of~$\cD(X)$ orthogonal to the rectangular part of a given Lefschetz decomposition is called its \textit{residual category}:
\begin{equation*}
\mathcal{R}_{\mB^\bullet}=\langle \mB_d, \mB_d(1), \dots, \mB_d(d) \rangle^{\bot}.
\end{equation*}
\end{Def}

\begin{Th}
The residual category of the Lefschetz decomposition \eqref{decomp23} is generated by $S_3$-orbit of the line bundle $\cO(1,-1,0)$.
\end{Th}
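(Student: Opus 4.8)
The plan is to identify the residual category $\mathcal{R}$ with the right orthogonal of the rectangular part $\langle\mB,\mB(1),\mB(2)\rangle$, and then to exhibit inside it the explicit exceptional collection $\mathcal{E}$ given by the $S_3$-orbit of $\cO(1,-1,0)$ (say, in lexicographic order) and show that it generates $\mathcal{R}$. First I would record the general setup. Since $\mB\subset\wb$, Theorem~\ref{theorem:x32} shows that $\langle\mB,\mB(1),\mB(2)\rangle$ is a semiorthogonal, admissible subcategory of $\cD(X_3^2)$ (it is generated by an exceptional collection of $21$ line bundles), so $\cD(X_3^2)=\langle\mB,\mB(1),\mB(2),\mathcal{R}\rangle$; as a consistency check, the Grothendieck group of $\mathcal{R}$ then has rank $27-21=6$, which matches $|\mathcal{E}|=6$.

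The first substantive step is to verify that $\mathcal{E}$ is an exceptional collection and that $\langle\mathcal{E}\rangle\subset\mathcal{R}$. Both statements reduce, via \eqref{eq:semiorthogonality} and the K\"unneth formula, to a finite list of elementary inequalities: for $\langle\mathcal{E}\rangle\subset\mathcal{R}$ one needs, for every generator $\cO(b)$ of the rectangular part and every $a$ in the $S_3$-orbit of $(1,-1,0)$, an index $i$ with $b_i-a_i\in\{1,2\}$; for exceptionality of $\mathcal{E}$ one needs the same for every lexicographically decreasing pair inside the orbit itself. Running through the ten $S_3$-orbits of generators and the six elements of the orbit, all these inequalities hold. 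This is the most mechanical part of the proof, but it is entirely routine.

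The key step is to show that the rectangular part together with $\mathcal{E}$ already generates the whole category. Let $\mathcal{T}'$ be the triangulated subcategory they generate; note $\mathcal{T}'$ is $S_3$-invariant. The point is that $\cO(-1,1,0)\in\mathcal{E}$ while $\cO(0,1,0),\cO(1,1,0)\in\mB$, so applying Corollary~\ref{bbb2} with $a=(-1,1,0)$ and $I=\{1\}$ gives $\cO(t,1,0)\in\mathcal{T}'$ for all $t\in\Z$, and in particular $\cO(2,1,0)\in\mathcal{T}'$. By $S_3$-invariance the entire orbit of $\cO(2,1,0)$ lies in $\mathcal{T}'$; hence $\mathcal{T}'$ contains $\langle\mB,\ S_3\!\cdot\!\cO(2,1,0)\rangle=\wb$ as well as $\mB(1)$ and $\mB(2)$, so $\mathcal{T}'=\langle\wb,\mB(1),\mB(2)\rangle=\cD(X_3^2)$ by Theorem~\ref{theorem:x32}.

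It remains to conclude. Combining the last two steps, $\cD(X_3^2)=\langle\mB,\mB(1),\mB(2),\langle\mathcal{E}\rangle\rangle$ is a semiorthogonal decomposition: semiorthogonality is the inclusion $\langle\mathcal{E}\rangle\subset\mathcal{R}$, generation is the key step, and $\langle\mathcal{E}\rangle$ is admissible because $\mathcal{E}$ is exceptional. Comparing with $\cD(X_3^2)=\langle\mB,\mB(1),\mB(2),\mathcal{R}\rangle$ and using that the right orthogonal complement of an admissible subcategory is unique, we obtain $\langle\mathcal{E}\rangle=\mathcal{R}$, which is the assertion. The only real obstacle is bookkeeping: keeping track of which orthogonal is taken and checking that the rectangular part is admissible in the claimed order; the geometric input is already provided by Theorem~\ref{theorem:x32} and Corollary~\ref{bbb2}.
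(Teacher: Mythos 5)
Your argument is correct and follows the paper's proof essentially verbatim: the same semiorthogonality checks via \eqref{eq:semiorthogonality}, and the same application of Corollary~\ref{bbb2} with $a=(-1,1,0)$ and $I=\{1\}$ to recover $\cO(2,1,0)$ and hence $\wb$ from $\mB$ together with the orbit of $\cO(1,-1,0)$. The only bookkeeping slip is the placement of $\mathcal{R}$: the vanishing you actually verify, namely $\Ext^\bullet$ from the rectangular part to $\mathcal{E}$, puts $\mathcal{R}$ on the \emph{left}, as in the paper's $\langle\mathcal{R},\mB,\mB(1),\mB(2)\rangle$, not on the right as written in your decomposition.
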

\begin{proof}
Denote by~$\mathcal{R}$ the category generated by $S_3$-orbit of the line bundle $\cO(1,-1,0)$. Firstly, we need to check that

\begin{align*}
\Ext^\bullet(\mB,\mathcal{R})=0,\\
\Ext^\bullet(\mB(1),\mathcal{R})=0,\\
\Ext^\bullet(\mB(2),\mathcal{R})=0.
\end{align*}

These equalities can be easily checked by inspection using \eqref{eq:semiorthogonality}.

Secondly, we prove that $\wb \subset \langle \mathcal{R}, \mB\rangle$. Clearly, for that it is enough to prove that the line bundle~$\cO(2,1,0)$ is in $\langle \mathcal{R}, \mB\rangle$. Indeed, we note that  $\cO(-1,1,0)$, $\cO(0,1,0)$, and $\cO(1,1,0)$ are in $\langle \mathcal{R}, \mB\rangle$. Applying Corollary~\ref{bbb2} with~$a = (-1,1,0)$ and $I = \{1\}$ we conclude that the line bundle~$\cO(2,1,0)$ is in $\langle \mathcal{R}, \mB\rangle$.

Thus
\begin{equation*}
\langle \mathcal{R},\mB, \mB(1), \mB(2) \rangle \supset  \langle \wb,\mB(1),\mB(2) \rangle = \cD(X_3^2).
\end{equation*}

Therefore $\mathcal{R}$ is the residual category of the Lefschetz decomposition \eqref{decomp23}.
\end{proof}

\end{document}